\theoremstyle{plain} % definition 
\newtheorem{proposition}[equation]{Proposition} 
\newtheorem{theorem}[equation]{Theorem} 
\newtheorem{cet}[equation]{Carleson Embedding Inequality}
\newtheorem{wcet}[equation]{Weighted Carleson Embedding Inequality}
\newtheorem{sntv}[equation]{Embedding Inequality of Sawyer and Nazarov-Treil-Volberg}
\theoremstyle{definition}
\theoremstyle{remark}
\newtheorem{remark}[equation]{Remark}
\newtheorem*{ack}{Acknowledgment}
\numberwithin{equation}{section}
\numberwithin{figure}{section}
\def\norm#1.#2.{\lVert#1\rVert_{#2}}
\def\Norm#1.#2.{\bigl\lVert#1\bigr\rVert_{#2}}
\def\NOrm#1.#2.{\Bigl\lVert#1\Bigr\rVert_{#2}}
\def\NORm#1.#2.{\biggl\lVert#1\biggr\rVert_{#2}}
\def\NORM#1.#2.{\Biggl\lVert#1\Biggr\rVert_{#2}}
\def\ip#1,#2,{\langle #1,#2\rangle}
\def\Ip#1,#2,{\bigl\langle#1,#2\bigr\rangle}
\def\IP#1,#2,{\Bigl\langle#1,#2\Bigr\rangle}
\def\Abs#1{\bigl\lvert#1\bigr\rvert}
\title {Two Weight Inequalities for Discrete Positive Operators}
\author[M.T. Lacey]{Michael T. Lacey}
\address{School of Mathematics \\
Georgia Institute of Technology \\
Atlanta GA 30332 }
\email{lacey@math.gatech.edu}
\thanks{Research supported in part by the NSF, through grant DMS-0456538 and 
DMS 0968499}
\author[E.T. Sawyer]{Eric T. Sawyer}
\address{ Department of Mathematics \& Statistics, McMaster University, 1280
Main Street West, Hamilton, Ontario, Canada L8S 4K1 }
\email{sawyer@mcmaster.ca}
\thanks{Research supported in part by NSERC}
\author[I. Uriarte-Tuero]{Ignacio Uriarte-Tuero}
\address{ Department of Mathematics \\
Michigan State University \\
East Lansing MI 48824}
\email{ignacio@math.msu.edu}
\thanks{Research supported in part by the NSF, through grant DMS-0901524} 
\date{}
\begin{document}

\begin{abstract}  We characterize two weight inequalities for general positive dyadic operators. 
 Let $ \boldsymbol \tau  = \{\tau_Q \;:\; Q\in \mathcal Q\}$ be non-negative constants 
 associated to dyadic cubes, and define a linear operators by 
 \begin{align*}
\operatorname T _{\boldsymbol \tau } f &\coloneqq
\sum _{Q\in \mathcal Q} \tau_Q \cdot \mathbb E _{Q}f \cdot \mathbf 1_{Q} \,. 
\end{align*}
Let $ \sigma , \omega $ be non-negative locally finite weights on $ \mathbb R ^{d}$. We characterize the 
two weight inequalities 
\begin{equation*}
\norm \operatorname T _{\boldsymbol \tau } (f \sigma ). L ^{q} (\omega ). 
\lesssim \norm f. L ^{p} (\sigma ).  \,, \qquad  1<p\le q < \infty \,, 
\end{equation*}
in terms of Sawyer-type testing conditions.  For specific choices of constants $ \tau_Q$, this reduces to the 
two weight fractional integral inequalities of Sawyer \cite{MR930072}. The case of $ p=q=2$, in dimension $ 1$, 
 was characterized by Nazarov-Treil-Volberg \cite{MR2407233}, which result has found several 
interesting applications. 
\end{abstract}

\maketitle

%%%%%%%%%%%%%%%%%%%%%%%%%%%%%% SECTION  SECTION SECTION
%%%%%%%%%%%%%%%%%%%%%%%%%%%%%% SECTION  SECTION SECTION 
\section{Introduction} %\label{s.}

Our interest is in extensions of the Carleson Embedding Theorem, especially in the discrete setting.  
We recall this well-known Theorem.  Let $ \mathcal Q $ be a choice of dyadic cubes in $ \mathbb R ^{d}$.  
For a cube $ Q$, set 
\begin{equation} \label{e.E}
\mathbb E _{Q}    f \coloneqq   \lvert  Q\rvert ^{-1}  { \int _{Q} f  \; dx }
\end{equation}
Here we are abusing the probabilistic notation for conditional expectation.

\begin{cet}  Let $ \{\tau_Q \;:\; \mathcal Q\}$ be non-negative constants, and let $ 1<p<\infty $.  Define 
\begin{gather}\label{e.Car}  
 \norm \tau_Q . \textup{Car} . \coloneqq 
\sup _{Q\in \mathcal Q} \lvert  Q\rvert  ^{-1} \sum _{\substack{R\in \mathcal Q \\ R\subset Q }}  \tau_R 
  \,, 
\\
C_p \coloneqq \sup _{ \norm f.p.=1} \Bigl[ \sum _{Q\in \mathcal Q} \tau_Q \lvert\mathbb E _Q  f \rvert ^{p} \Bigr] ^{1/p}  \,.
\end{gather}
We have the equivalence $ C_p \simeq \norm \tau_Q . \textup{Car}. ^{1/p} $.  
\end{cet}

We are interested in weighted inequalities, especially two-weight inequalities, and in particular we will give 
discrete extensions of results of Sawyer \cite{MR930072} (also see \cites{MR1437584,MR1175693}) 
and  Nazarov-Treil-Volberg \cite{MR1685781}.  
For the study of such inequalities, 
it is imperative to have \emph{universal} statements, universal in the weight, that can be applied to particular operators.   
By a \emph{weight} we mean a non-negative locally integrable function $ \omega \;:\; \mathbb R ^{d} \to [0,\infty ) $. 
While this is somewhat restrictive, by a limiting procedure, one can pass to more general measures.  For such weights, 
and `nice' sets like cubes $ Q$ we will set 
\begin{equation*}
\omega (Q) \coloneqq \int _{Q} \omega \; dx \,. 
\end{equation*}
  
A first operator that one can construct from a weight is the (dyadic) \emph{maximal function associated to $ w$} 
given by 
\begin{align} \label{e.Mmu}
\operatorname M _{\omega } f (x) & \coloneqq \sup _{Q\in \mathcal Q} \mathbf 1_{Q} (x) \mathbb E ^{\omega }_Q \lvert  f\rvert  \,, 
\\
\mathbb E ^{\omega }_Q f & \coloneqq \omega (Q) ^{-1}  \int _{Q} f \, \omega \; dx \,. 
\end{align}
Here we are extending the definition in \eqref{e.E} to \emph{arbitrary} weights. 
It is a basic fact, proved by exactly the same methods that proves the non-weighted inequality, that we have 

%%%%%%%%%%%%%%%%%%%%%%%%%%%%%% THEOREM THEOREM THEOREM
\begin{theorem}\label{t.max} We have the inequalities 
\begin{equation}\label{e.max}
\norm \operatorname M _{\omega } f. L ^{p} (\omega ). \lesssim \norm f. L ^{p} (\omega ).  \,, \qquad 1<p< \infty \,. 
\end{equation}
\end{theorem}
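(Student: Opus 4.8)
The plan is to follow the classical argument for the unweighted dyadic maximal function, carried through verbatim with Lebesgue measure replaced by the measure $\omega\,dx$. First I would establish the weak-type $(1,1)$ bound
\begin{equation*}
\omega\bigl(\{x \mid \operatorname M_\omega f(x) > \lambda\}\bigr) \lesssim \lambda^{-1} \norm f. L^1(\omega).
\end{equation*}
The key step here is the stopping-time / maximal-cube decomposition: for each $\lambda>0$ the set $\{\operatorname M_\omega f>\lambda\}$ is, by definition of the dyadic maximal operator, a disjoint union of maximal dyadic cubes $Q_j$ satisfying $\mathbb E^\omega_{Q_j}\lvert f\rvert > \lambda$. (Maximality is available because the cubes $Q$ with $\mathbb E^\omega_Q\lvert f\rvert>\lambda$ containing a fixed point form a chain, and $f\in L^1(\omega)$ forces them to stop growing — one should note the harmless caveat that one may restrict attention to cubes with $\omega(Q)<\infty$, which holds since $\omega$ is locally integrable, and any cube with $\omega(Q)=0$ contributes nothing.) Summing $\omega(Q_j) < \lambda^{-1}\int_{Q_j}\lvert f\rvert\,\omega\,dx$ over the disjoint family gives the weak-type estimate.

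Next I would record the trivial $L^\infty$ bound $\norm \operatorname M_\omega f. L^\infty(\omega). \le \norm f. L^\infty(\omega).$, which is immediate since each average $\mathbb E^\omega_Q\lvert f\rvert$ is bounded by $\norm f. L^\infty(\omega).$. Then the Marcinkiewicz interpolation theorem applied to the sublinear operator $\operatorname M_\omega$ on the measure space $(\mathbb R^d,\omega\,dx)$ yields the strong-type $(p,p)$ inequality \eqref{e.max} for all $1<p<\infty$, with implied constant depending only on $p$ and $d$ (in fact only on $p$). Alternatively, and perhaps cleaner to write, one can bypass interpolation: split $f = f\mathbf 1_{\{\lvert f\rvert>\lambda/2\}} + f\mathbf 1_{\{\lvert f\rvert\le\lambda/2\}}$, apply the weak-$(1,1)$ bound to the first piece and the $L^\infty$ bound to the second, then integrate the distributional inequality $p\int_0^\infty \lambda^{p-1}\omega(\{\operatorname M_\omega f>\lambda\})\,d\lambda$ against the resulting pointwise estimate and use Fubini.

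The genuinely substantive point — the thing worth a sentence of care rather than a routine calculation — is the maximal-cube/Calderón–Zygmund stopping argument in the weak-$(1,1)$ proof, and the observation that \emph{nothing} in it uses special properties of Lebesgue measure: the dyadic grid provides the nestedness that makes maximal cubes well-defined, and local integrability of $\omega$ provides the finiteness needed to stop the stopping time. Since this is a known result stated here only for reference (as the text notes, ``proved by exactly the same methods that proves the non-weighted inequality''), I would keep the write-up brief, emphasizing that the argument is the standard one with $dx \rightsquigarrow \omega\,dx$ throughout, and spell out only the decomposition of $\{\operatorname M_\omega f > \lambda\}$ into maximal cubes and the two endpoint bounds. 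The main obstacle is purely expository: deciding how much of a textbook proof to reproduce versus cite.
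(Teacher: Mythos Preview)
Your proposal is correct and is precisely the approach the paper has in mind: the paper does not actually write out a proof of this theorem, instead declaring it ``a basic fact, proved by exactly the same methods that proves the non-weighted inequality,'' which is exactly the weak-$(1,1)$ stopping-time argument plus $L^\infty$ bound plus interpolation that you outline.
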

%%%%%%%%%%%%%%%%%%%%%%%%%%%%%% THEOREM THEOREM THEOREM

This, by exactly the same proof that proves the Carleson Embedding Theorem, gives us 

%%%%%%%%%%%%%%%%%%%%%%%%%%%%%% THEOREM THEOREM THEOREM
\begin{wcet}
 Let $ \{\tau_Q \;:\; \mathcal Q\}$ be non-negative constants,  let $ 1<p<\infty $ and let 
$ w$ be a weight.  Define a weighted version of the Carleson norm by 
\begin{gather}\label{e.wtdCar}
\norm \tau_Q . \textup{Car}, w . \coloneqq 
\sup _{Q\in \mathcal Q} \omega (Q) ^{-1} \sum _{\substack{R\in \mathcal Q \\ R\subset Q }}  \tau_R  \,, 
\\
C_{p,w}  \coloneqq 
\sup _{ \norm f. L ^p (\omega ).=1} \Bigl[ \sum _{Q\in \mathcal Q} \tau_Q \Abs{\mathbb E _Q ^{\omega } f}^{p} \Bigr] ^{1/p} \,. 
\end{gather}
We have the equivalence $ C_{p,w} \simeq \norm \tau_Q . \textup{Car},w. ^{1/p} $.  
\end{wcet}
%%%%%%%%%%%%%%%%%%%%%%%%%%%%%% THEOREM THEOREM THEOREM

This is a foundational estimate in the two-weight theory, indeed the only tool needed for the proof of 
the two-weight maximal Theorem of Sawyer \cite{MR676801}. 

We are concerned with the following deep extension, obtained by Nazarov-Treil-Volberg \cite{MR1685781}, of the 
Theorem of Eric Sawyer on two-weight inequalities for fractional integrals \cite{MR930072}.

%%%%%%%%%%%%%%%%%%%%%%%%%%%%%% THEOREM THEOREM THEOREM
\begin{sntv}  \label{t.sntv} Let $ \{\tau_Q \;:\; Q\in \mathcal Q\}$ be non-negative constants.  Let $ w, \sigma $ be 
weights.   Define 
\begin{align}\label{e.sntv1}
C_1 ^2 & \coloneqq \sup _{R} \sigma (R)  ^{-1} \int \Bigl[ \sum _{Q\subset R}  \tau_Q \mathbf 1_{Q}  \mathbb E _Q \sigma 
\Bigr] ^2 \omega \,,  
\\ \label{e.sntv2}
C_2 ^2 & \coloneqq \sup _{R} \omega (R)  ^{-1} \int \Bigl[ \sum _{Q\subset R}  \tau_Q \mathbf 1_{Q}  \mathbb E _Q w
\Bigr] ^2 \sigma   
\\
C_3 & \coloneqq 
\sup _{\norm f . L ^{2} (\sigma ).=1} \sup _{\norm g . L ^2 (\omega ).=1}
\sum _{Q\in \mathcal Q} \tau_Q \mathbb E _Q (f \sigma ) \cdot \mathbb E  _Q (g \omega ) \cdot \lvert  Q\rvert 
 \,. 
\end{align}
We have the equivalence $ C_3 \simeq C_1 + C_2 $.  
\end{sntv}
%%%%%%%%%%%%%%%%%%%%%%%%%%%%%% THEOREM THEOREM THEOREM

The case of $ \tau_Q = \lvert  Q\rvert ^{\alpha /d}$ for $ 0<\alpha <1$ corresponds to the result of Sawyer.  
Nazarov-Treil-Volberg identified the critical role of this result in two-weight inequalities.   
And it has been subsequently used in the proofs of several results, such as 
\cites{MR2433959,MR2354322,MR2367098,MR1748283,MR1897458} among other papers.  

The Nazarov-Treil-Volberg proof uses the Bellman Function approach.  
Our purpose is to give a new proof of this result, as well as extensions of it.  In particular, our proof 
will work in all dimensions, a result that is new (but expected) in dimensions $ d\ge 2$ and higher.  
We  discuss the general case of $ 1< p\le q < \infty $.  
We also focus on the quantitative versions of these Theorems, as such estimates are important for 
applications.  

\bigskip 

 Let $ \boldsymbol \tau  = \{\tau_Q \;:\; Q\in \mathcal Q\}$ be non-negative constants, and define  linear operators by 
 \begin{align}
\operatorname T _{\boldsymbol \tau } f &\coloneqq \sum _{Q\in \mathcal Q} \tau_Q \cdot \mathbb E _{Q}f \cdot \mathbf
1_{Q} \,,
\\  \label{e.Tin}
\operatorname T _{\boldsymbol \tau, R }  ^{\textup{in}} f &\coloneqq \sum _{\substack{Q\in \mathcal Q\\ Q\subset R}
} \tau_Q \cdot \mathbb E _{Q}f \cdot \mathbf
1_{Q} \,,
\\ \label{e.Tout}
\operatorname T _{\boldsymbol \tau, R }  ^{\textup{out}} f &\coloneqq \sum _{\substack{Q\in \mathcal Q\\ Q\supset R}
} \tau_Q \cdot \mathbb E _{Q}f \cdot \mathbf
1_{Q} 
\end{align}
Here, we are defining the operator $T_\alpha$ and two different `localizations' of $T_\alpha$
corresponding to a cube $R$, one local and the other global.
With these definitions, we have the following equality: 
\begin{equation}\label{e.T=in+out}
 \operatorname T _{\boldsymbol \tau }f (x) = \operatorname T _{\boldsymbol \tau, R }  ^{\textup{in}} f (x) 
 +  \operatorname T _{\boldsymbol \tau, R ^{(1)} }  ^{\textup{out}} f (x') \,, \qquad 
 x\in R\,\  x'\in R ^{(1)} \,. 
\end{equation}
Here and below, we will denote by $ R ^{(1)}$ the `parent' of $ R$:  The minimal dyadic cube that strictly 
contains $ R$.  
Note that the previous Theorem characterizes  the inequality 
\begin{equation}\label{e.T}
\norm \operatorname T _{\boldsymbol \tau} (f \sigma ). L ^{2} (\omega ). \lesssim \norm f. L ^2 (\sigma ).  
\end{equation}

Below, we consider the $ L ^{p} (\sigma ) $ to $ L ^{q} (\omega )$ mapping properties of $ \operatorname T
_{\boldsymbol \tau}$, where $ 1< p\le q < \infty $.  These inequalities are immediately translatable into 
bilinear embedding inequalities.   First, we have the weak-type inequalities.

%%%%%%%%%%%%%%%%%%%%%%%%%%%%%% THEOREM THEOREM THEOREM
\begin{theorem}\label{t.dyadicWeakSawyer} Let $\boldsymbol \tau  $ be non-negative constants, and 
$ w, \sigma $ weights. Let $ 1<p\le q < \infty $. 
Define 
\begin{align}\label{e.Weak}
\llbracket \sigma , \omega \rrbracket _{\boldsymbol \tau, p,q}  ^{\textup{Loc}}
&\coloneqq 
\sup _{R\in \mathcal Q}  \omega (R) ^{-1/q'} \norm \operatorname T _{\boldsymbol \tau, R} ^{\textup{in}} (\omega \mathbf 1_{R} ). L ^{p'} (\sigma ). 
\\  \label{e.WEak}
\llbracket \sigma , \omega \rrbracket _{\boldsymbol \tau, p,q}  ^{\textup{Glo}}
&\coloneqq 
\sup _{R\in \mathcal Q}  \omega(R) ^{-1/q'} \norm 
\operatorname T _{\boldsymbol \tau, R} ^{\textup{out}} (\omega \mathbf 1_{R} ). L ^{p'} (\sigma  ). 
\end{align}
We have the equivalence of norms below. 
\begin{align}\label{e.WWeak1}
\norm \operatorname T _{\boldsymbol \tau} (\sigma \cdot ) . L ^{p} (\sigma ) \mapsto L ^{q, \infty } (\omega ). 
&\simeq 
\llbracket \sigma , \omega \rrbracket _{\boldsymbol \tau, p,q}  ^{\textup{Loc}}  \,, \qquad 1<p\le q < \infty 
\\  \label{e.WWeak2}
\norm \operatorname T _{\boldsymbol \tau} (\sigma \cdot ) . L ^{p} (\sigma ) \mapsto L ^{q, \infty } (\omega ). 
&\simeq 
\llbracket \sigma , \omega \rrbracket _{\boldsymbol \tau, p,q}  ^{\textup{Glo}} \,, \qquad 1<p<q<\infty \,. 
\end{align}
Note that the first equivalence holds for  $ p \leq q$, while the second requires a strict inequality.  
\end{theorem}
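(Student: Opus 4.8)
The plan is to establish both equivalences by proving the two inequalities in each: the necessity of the testing constants is routine, while sufficiency is the heart of the matter. For necessity, given $R$, one tests the weak-type inequality on $f = \mathbf 1_R \cdot (\text{something extremal for the dual norm})$. Precisely, for the local constant one notes that $\operatorname T_{\boldsymbol\tau}(\sigma g) \geq \operatorname T_{\boldsymbol\tau,R}^{\textup{in}}(\sigma g)$ pointwise on $R$ when $g\geq 0$ supported on $R$, and on $R$ this inner operator is bounded below by a constant multiple of $\mathbb E_R(\sigma g)\cdot\mathbf 1_R$ only through the single cube $R$; more efficiently, one pairs $\operatorname T_{\boldsymbol\tau}(\sigma g)$ against $\omega\mathbf 1_R$ and uses the self-adjointness relation $\langle \operatorname T_{\boldsymbol\tau}(\sigma g),\omega\mathbf 1_R\rangle = \langle g, \operatorname T_{\boldsymbol\tau,R}^{\textup{in}}(\omega\mathbf 1_R)\sigma\rangle + (\text{outer part})$ to extract $\llbracket\sigma,\omega\rrbracket^{\textup{Loc}}$. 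Testing the weak $L^{q,\infty}(\omega)$ bound on the set $R$ itself, together with Hölder duality in $L^{p'}(\sigma)$, yields $\llbracket\sigma,\omega\rrbracket^{\textup{Loc}}_{\boldsymbol\tau,p,q}\lesssim \norm \operatorname T_{\boldsymbol\tau}(\sigma\cdot). L^p(\sigma)\mapsto L^{q,\infty}(\omega).$, and similarly for the global version using $\operatorname T_{\boldsymbol\tau,R^{(1)}}^{\textup{out}}$ and \eqref{e.T=in+out}; the strictness $p<q$ enters here because the outer sum telescopes through ancestors of $R$ and one needs $q>p$ to sum a geometric-type series in the scales.

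For sufficiency of the local characterization, I would run a Calderón–Zygmund-type stopping-time decomposition adapted to $\sigma$. Fix $f\in L^p(\sigma)$ with $\norm f. L^p(\sigma).=1$ and a level $\lambda>0$; we must bound $\omega(\{\operatorname T_{\boldsymbol\tau}(f\sigma)>\lambda\})$. Build the stopping cubes $\mathcal S$ as the maximal cubes $Q$ with $\mathbb E_Q^\sigma|f| > 2\mathbb E_{Q'}^\sigma|f|$ for the $\mathcal S$-parent $Q'$ (starting from a large initial cube, or by a standard exhaustion argument to handle all of $\mathbb R^d$), so that the stopping values $\alpha_S \coloneqq \mathbb E_S^\sigma|f|$ satisfy Carleson packing $\sum_{S'\subset S,\ S'\in\mathcal S}\sigma(S')\lesssim \sigma(S)$ and a quasi-orthogonality estimate $\sum_{S\in\mathcal S}\alpha_S^p\,\sigma(S)\lesssim \norm f. L^p(\sigma).^p$. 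Split $\operatorname T_{\boldsymbol\tau}(f\sigma) = \sum_{S\in\mathcal S}\operatorname T_{\boldsymbol\tau}^S$, where $\operatorname T_{\boldsymbol\tau}^S$ collects the cubes $Q$ whose stopping ancestor is $S$; on such $Q$ one has $\mathbb E_Q(f\sigma)\lesssim \alpha_S\,\mathbb E_Q\sigma$, so $\operatorname T_{\boldsymbol\tau}^S(f\sigma)\lesssim \alpha_S\cdot \operatorname T_{\boldsymbol\tau,S}^{\textup{in}}(\sigma\mathbf 1_S)$ pointwise inside $S$ (the cubes $Q$ in this block are contained in $S$). By the definition of $\llbracket\sigma,\omega\rrbracket^{\textup{Loc}}$ applied with roles dualized — equivalently by the dual formulation $\norm \operatorname T_{\boldsymbol\tau,S}^{\textup{in}}(\sigma\mathbf 1_S). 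L^q(\omega). \lesssim \llbracket\sigma,\omega\rrbracket^{\textup{Loc}}\,\sigma(S)^{1/p}$ — each block map satisfies a restricted-type bound, and one assembles the pieces using the disjointness properties of the stopping tree together with a $q$-superadditivity / interpolation argument to pass from the $L^p(\sigma)\to L^{q,\infty}(\omega)$ restricted bound on each $S$ to the full weak bound, summing $\sum_S \alpha_S^q\sigma(S)^{q/p}$ against $\lambda^q$ and invoking $q\geq p$ to dominate this by $(\sum_S\alpha_S^p\sigma(S))^{q/p}\lesssim 1$.

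The global case \eqref{e.WWeak2} is handled by the decomposition \eqref{e.T=in+out}: write $\operatorname T_{\boldsymbol\tau}(f\sigma) = \operatorname T_{\boldsymbol\tau,R}^{\textup{in}}(f\sigma) + \operatorname T_{\boldsymbol\tau,R^{(1)}}^{\textup{out}}(f\sigma)$ on $R$. The outer part is constant on each dyadic cube at the scale of its argument, so $\operatorname T_{\boldsymbol\tau}^{\textup{out}}(f\sigma)$ on $R$ depends only on the ancestors of $R$; a direct summation of $\tau_Q\mathbb E_Q(f\sigma)$ over the chain of ancestors, controlled via the global testing constant and Hölder's inequality, gives an $\ell^{q}$-in-scale bound that requires $p<q$ to converge, whence the $L^{q,\infty}(\omega)$ bound. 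Combining with the local estimate already proved closes the argument.

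I expect the main obstacle to be the quasi-orthogonality / assembly step in the sufficiency direction: passing from the per-stopping-cube restricted-type bounds to a genuine weak-type $(p,q)$ bound requires carefully exploiting that the stopping cubes form a Carleson family and that the operator pieces $\operatorname T_{\boldsymbol\tau}^S$ "live" on the disjointish regions $E_S = S\setminus\bigcup_{S'\subsetneq S}S'$, and then summing the resulting $\ell^q$ series. Getting the numerology right — in particular that $p\le q$ suffices for the local estimate but $p<q$ is genuinely needed for the global one because the ancestor sum is a geometric series only when $q>p$ — is the delicate point, and is exactly where the hypothesis split in the theorem statement originates.
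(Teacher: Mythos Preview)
Your proposal has a genuine gap in the sufficiency direction for the local condition. The testing constant $\llbracket\sigma,\omega\rrbracket^{\textup{Loc}}_{\boldsymbol\tau,p,q}$ is, by definition, a bound on $\lVert \operatorname T^{\textup{in}}_R(\omega\mathbf 1_R)\rVert_{L^{p'}(\sigma)}$ in terms of $\omega(R)^{1/q'}$. The inequality you invoke, $\lVert \operatorname T^{\textup{in}}_S(\sigma\mathbf 1_S)\rVert_{L^q(\omega)} \lesssim \llbracket\sigma,\omega\rrbracket^{\textup{Loc}}\,\sigma(S)^{1/p}$, is \emph{not} a reformulation of this: it is the \emph{dual} testing condition $\llbracket\omega,\sigma\rrbracket^{\textup{Loc}}_{\boldsymbol\tau,q',p'}$, which characterizes the dual weak-type map $L^{q'}(\omega)\to L^{p',\infty}(\sigma)$, not the one at hand. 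The two constants are not comparable in general (indeed, the strong-type theorem needs both precisely because neither controls the other), and your corona argument --- replacing $f$ by its stopping values and then needing to control $\operatorname T^{\textup{in}}_S(\sigma\mathbf 1_S)$ in $L^q(\omega)$ --- is forced onto the wrong one. There is no ``dual formulation'' converting one into the other short of already knowing the weak-type bound you are trying to prove.

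The paper sidesteps this by working with the level sets of $\operatorname T(f\sigma)$ rather than a corona on $f$. For $Q_0$ maximal in $\{\operatorname T(f\sigma)>\lambda\}$, the outer part $\operatorname T^{\textup{out}}_{Q_0^{(1)}}(f\sigma)$ is at most $\lambda$, so on $Q_0\cap\{\operatorname T(f\sigma)>2\lambda\}$ the inner part $\operatorname T^{\textup{in}}_{Q_0}(f\sigma)$ exceeds $\lambda$. One then integrates $\operatorname T^{\textup{in}}_{Q_0}(f\sigma)$ against $\omega$ over $Q_0$ and uses self-adjointness to move the operator onto $\omega\mathbf 1_{Q_0}$; H\"older then produces exactly $\lVert \operatorname T^{\textup{in}}_{Q_0}(\omega\mathbf 1_{Q_0})\rVert_{L^{p'}(\sigma)}$, the correct testing quantity. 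A good-$\lambda$ absorption (splitting the $Q_0$'s according to whether $Q_0\cap\{\operatorname T(f\sigma)>2\lambda\}$ has small or large $\omega$-mass) finishes. You also misplace the role of the strict inequality $p<q$: necessity is one-line Lorentz duality applied to $\mathbf 1_R$ in both the local and global cases, with no restriction beyond $p\le q$. The strictness is needed for \emph{sufficiency} of the global condition, where the paper builds a chain of cubes $Q_k\ni x$ with $\omega(Q_{k+1})\le\tfrac12\omega(Q_k)$ and must sum $\sum_k \omega(Q_k)^{1/p-1/q}$, which converges only when $p<q$.
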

%%%%%%%%%%%%%%%%%%%%%%%%%%%%%% THEOREM THEOREM THEOREM

The `global conditions', in \eqref{e.WWeak2} above and in \eqref{e.strong2} below,
arise from the observations of Gabidzashvili and Kokilashvili \cite{gk}, also see 
\cite{MR1791462}*{Chapter 3} and \cite{MR804116}.  
There is a corresponding, harder,  strong-type characterization.  

%%%%%%%%%%%%%%%%%%%%%%%%%%%%%% THEOREM THEOREM THEOREM
\begin{theorem}\label{t.strong}  Under the same assumptions as Theorem~\ref{t.dyadicWeakSawyer} we have 
the equivalences of norms below.  
\begin{align}\label{e.strong1}
\norm \operatorname T _{\boldsymbol \tau} (\sigma \cdot ) . L ^{p} (\sigma ) \mapsto L ^{q } (\omega ). 
&\simeq 
\llbracket \sigma , \omega \rrbracket _{\boldsymbol \tau, p,q}  ^{\textup{Loc}}
+
\llbracket w, \sigma  \rrbracket _{\boldsymbol \tau, q',p'}  ^{\textup{Loc}}
\,, \qquad 1<p\le q < \infty 
\\ \label{e.strong2}
\norm \operatorname T _{\boldsymbol \tau} (\sigma \cdot ) . L ^{p} (\sigma ) \mapsto L ^{q } (\omega ). 
&\simeq 
\llbracket \sigma , \omega \rrbracket _{\boldsymbol \tau, p,q}  ^{\textup{Glo}} 
+
\llbracket w, \sigma  \rrbracket _{\boldsymbol \tau, q',p'.}  ^{\textup{Glo}}
\,, \qquad 1<p<q<\infty \,. 
\end{align}
In particular, the case of \eqref{e.strong1} with $ p=q=2$ is Theorem~\ref{t.sntv}.  
\end{theorem}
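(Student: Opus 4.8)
The necessity direction ($\lesssim$ in \eqref{e.strong1}) is routine. Write $\mathcal N$ for the operator norm on its left. Since $\tau_Q\ge0$, on any cube $R$ one has $\operatorname T_{\boldsymbol\tau}(\sigma\mathbf 1_R)\ge\operatorname T_{\boldsymbol\tau,R}^{\textup{in}}(\sigma\mathbf 1_R)$ pointwise, so testing on $f=\mathbf 1_R$ gives $\norm\operatorname T_{\boldsymbol\tau,R}^{\textup{in}}(\sigma\mathbf 1_R).L^{q}(\omega).\le\mathcal N\,\sigma(R)^{1/p}$, i.e.\ $\llbracket\omega,\sigma\rrbracket_{\boldsymbol\tau,q',p'}^{\textup{Loc}}\le\mathcal N$. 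The bilinear form $B(f,g)\coloneqq\sum_{Q}\tau_Q\mathbb E_Q(f\sigma)\mathbb E_Q(g\omega)\lvert Q\rvert$ is symmetric in $(f,\sigma)$ and $(g,\omega)$, so the adjoint of $\operatorname T_{\boldsymbol\tau}(\sigma\,\cdot)\colon L^{p}(\sigma)\to L^{q}(\omega)$ is $\operatorname T_{\boldsymbol\tau}(\omega\,\cdot)\colon L^{q'}(\omega)\to L^{p'}(\sigma)$, of the same norm; testing \emph{it} on indicators yields $\llbracket\sigma,\omega\rrbracket_{\boldsymbol\tau,p,q}^{\textup{Loc}}\le\mathcal N$. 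For \eqref{e.strong2} one argues identically, using \eqref{e.T=in+out} to pass from $\operatorname T^{\textup{in}}$ to $\operatorname T^{\textup{out}}$.

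Sufficiency is the substance, and I would prove it by a parallel corona (parallel stopping-time) argument. Put $\mathfrak T\coloneqq\llbracket\omega,\sigma\rrbracket_{\boldsymbol\tau,q',p'}^{\textup{Loc}}$ and $\mathfrak T^{*}\coloneqq\llbracket\sigma,\omega\rrbracket_{\boldsymbol\tau,p,q}^{\textup{Loc}}$; unwinding the definitions, $\mathfrak T=\sup_R\sigma(R)^{-1/p}\norm\operatorname T_{\boldsymbol\tau,R}^{\textup{in}}(\sigma\mathbf 1_R).L^{q}(\omega).$ and $\mathfrak T^{*}=\sup_R\omega(R)^{-1/q'}\norm\operatorname T_{\boldsymbol\tau,R}^{\textup{in}}(\omega\mathbf 1_R).L^{p'}(\sigma).$. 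By duality, monotone convergence and homogeneity it suffices to bound $B(f,g)$, for $0\le f\in L^{p}(\sigma)$, $0\le g\in L^{q'}(\omega)$ and the sum taken over a finite $\mathcal Q_0\subset\mathcal Q$, by a constant multiple of $(\mathfrak T+\mathfrak T^{*})\norm f.L^{p}(\sigma).\norm g.L^{q'}(\omega).$. Fix a large $\Lambda>1$ and build a stopping family $\mathcal F$ for $f$ with respect to $\sigma$-averages --- the maximal cubes on which $\langle f\rangle^\sigma$ has grown by a factor $\Lambda$ over the ancestor stopping cube --- with corona projection $\pi_{\mathcal F}$ and coronas $\mathcal C_F=\{Q:\pi_{\mathcal F}Q=F\}$, and a parallel family $\mathcal G$ for $g$ with respect to $\omega$. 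The properties used are: (i) on each corona $\mathbb E_Q(f\sigma)\le\Lambda\langle f\rangle^{\sigma}_{\pi_{\mathcal F}Q}\mathbb E_Q(\sigma\mathbf 1_{\pi_{\mathcal F}Q})$, and the $g$-analogue; (ii) $\mathcal F$ is $\sigma$-Carleson and $\mathcal G$ is $\omega$-Carleson; (iii) the quasi-orthogonality $\sum_{F\in\mathcal F}(\langle f\rangle^{\sigma}_{F})^{p}\sigma(F)\lesssim\norm f.L^{p}(\sigma).^{p}$ and its $(g,\omega,q')$-counterpart, both of which follow from Theorem~\ref{t.max}. This is precisely the step that needs $1<p$ and $q<\infty$.

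For each $Q\in\mathcal Q_0$ the cubes $\pi_{\mathcal F}Q$ and $\pi_{\mathcal G}Q$ are nested, so $B=B_1+B_2$ according to whether $\pi_{\mathcal F}Q\subseteq\pi_{\mathcal G}Q$ or $\pi_{\mathcal G}Q\subsetneq\pi_{\mathcal F}Q$; the symmetry above interchanges $B_1$ with $B_2$ --- up to the diagonal $\pi_{\mathcal F}Q=\pi_{\mathcal G}Q$, which a single testing condition handles directly --- and interchanges $\mathfrak T$ with $\mathfrak T^{*}$, so it is enough to estimate $B_1$. For $B_1$ I would group by $F\in\mathcal F$, use (i) to replace $\mathbb E_Q(f\sigma)$ by $\Lambda\langle f\rangle^{\sigma}_{F}\mathbb E_Q(\sigma\mathbf 1_{F})$, and observe $\sum_{Q\in\mathcal C_F}\tau_Q\mathbb E_Q(\sigma\mathbf 1_F)\mathbf 1_Q\le\operatorname T_{\boldsymbol\tau,F}^{\textup{in}}(\sigma\mathbf 1_F)$ pointwise, obtaining $B_1\lesssim\sum_{F\in\mathcal F}\langle f\rangle^{\sigma}_{F}\int_F\operatorname T_{\boldsymbol\tau,F}^{\textup{in}}(\sigma\mathbf 1_F)\,g\,\omega$; H\"older and the forward testing condition then give $\int_F\operatorname T_{\boldsymbol\tau,F}^{\textup{in}}(\sigma\mathbf 1_F)\,g\,\omega\le\mathfrak T\,\sigma(F)^{1/p}\norm g\mathbf 1_F.L^{q'}(\omega).$. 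It remains to bound $\sum_{F\in\mathcal F}\langle f\rangle^{\sigma}_{F}\,\sigma(F)^{1/p}\,\norm g\mathbf 1_F.L^{q'}(\omega).$ by $\norm f.L^{p}(\sigma).\norm g.L^{q'}(\omega).$.

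This final reassembly is the main obstacle. One cannot simply apply H\"older over $F\in\mathcal F$: the family $\mathcal F$ overlaps (it is a Carleson family, not a partition) and it is Carleson for $\sigma$, the wrong measure for $g$, so $\sum_F\norm g\mathbf 1_F.L^{q'}(\omega).^{p'}$ need not be comparable to $\norm g.L^{q'}(\omega).^{p'}$. Here both the corona $\mathcal G$ and the embedding inequalities are genuinely needed: one splits each $g\mathbf 1_F$ along the $\mathcal G$-tree, returning the part of $g$ living strictly below $F$ in $\mathcal G$ to the companion term $B_2$ (where the two trees reverse roles), and recombines the remaining pieces using the $\mathcal G$-Carleson property, the geometric growth of the stopping averages along chains, and the (Weighted) Carleson Embedding Theorem applied with both $\sigma$ and $\omega$; the exponent bookkeeping closes exactly because $p\le q$. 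When $p=q=2$ this collapses to a transparent $L^2$ orthogonality estimate --- the case proved in \cite{MR1685781} by Bellman functions --- and replacing that orthogonality by the embedding inequalities is what makes all dimensions and all $1<p\le q<\infty$ accessible. Finally, the global statement \eqref{e.strong2} follows by running the same scheme after \eqref{e.T=in+out} isolates the outer operator $\operatorname T_{\boldsymbol\tau,R}^{\textup{out}}$; following Gabidzashvili--Kokilashvili this piece is estimated scale by scale, and the series over scales converges geometrically precisely because $p<q$ strictly, which is also why \eqref{e.strong2} must fail at $p=q$.
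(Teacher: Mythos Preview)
Your approach is genuinely different from the paper's. The paper never dualizes to the bilinear form and never builds two stopping families. Instead it works directly with $\lVert\operatorname T(f\sigma)\rVert_{L^{q}(\omega)}^{q}$: it takes a Whitney decomposition of the level sets $\Omega_k=\{\operatorname T(f\sigma)>2^{k}\}$, proves a maximum principle to localize $\operatorname T$ to $\operatorname T^{\textup{in}}_{Q^{(1)}}$ on each Whitney cube, and splits into three pieces $S_1,S_2,S_3$. The piece $S_1$ is absorbed back into the left-hand side (a good-$\lambda$ style move); $S_2$ uses only $\mathfrak L$; the hard piece $S_3$ brings in principal cubes for $f$ alone (not a parallel family for $g$) together with a linearization of $\operatorname M_\omega$, and uses the dual testing constant $\mathfrak L_\ast$ through the already-proved weak-type theorem. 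For \eqref{e.strong2} the paper does nothing new: it simply quotes the weak-type section to say that the local and global constants are equivalent when $p<q$, so \eqref{e.strong2} follows from \eqref{e.strong1}. Your direct scale-by-scale treatment of the global case is therefore more work than necessary.

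Your parallel-corona plan is a legitimate alternative route (it is essentially the later approach of Treil and of Hyt\"onen), and you correctly flag the reassembly $\sum_{F}\langle f\rangle_F^{\sigma}\sigma(F)^{1/p}\lVert g\mathbf 1_F\rVert_{L^{q'}(\omega)}$ as the obstacle: $\mathcal F$ is $\sigma$-Carleson, not $\omega$-Carleson, so the naive $\ell^{p}$--$\ell^{p'}$ H\"older fails. But your resolution as written does not work. Once you have already applied H\"older and testing to pass from $B_1$ to $\mathfrak T\sum_F\langle f\rangle_F^{\sigma}\sigma(F)^{1/p}\lVert g\mathbf 1_F\rVert_{L^{q'}(\omega)}$, there is nothing left to ``return to $B_2$'': the decomposition $B=B_1+B_2$ was made at the level of cubes $Q$, not of the function $g$, and at this stage the cubes are gone. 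The way the known proofs close this gap is \emph{before} applying H\"older: one uses the $\mathcal G$-stopping bound as well, so that on the $B_1$ side each $Q$ carries $\mathbb E_Q(g\omega)\lesssim\langle g\rangle_{\pi_{\mathcal G}F}^{\omega}\,\mathbb E_Q\omega$ (since $\pi_{\mathcal G}Q=\pi_{\mathcal G}F$ whenever $\pi_{\mathcal F}Q=F\subseteq\pi_{\mathcal G}Q$), and the sum collapses to $\mathfrak T\sum_F\langle f\rangle_F^{\sigma}\sigma(F)^{1/p}\langle g\rangle_{\pi_{\mathcal G}F}^{\omega}\omega(F)^{1/q'}$; the remaining double sum over $(F,G)$ is then handled by the quasi-orthogonality of \emph{both} stopping families together with $p\le q$. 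Your sketch conflates this with a post-hoc splitting of $g\mathbf 1_F$, which cannot be carried out after the H\"older step. In short: right strategy, but the crucial step is not the one you describe.
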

%%%%%%%%%%%%%%%%%%%%%%%%%%%%%% THEOREM THEOREM THEOREM

We can take $ \sigma  $ and $ w$ to be finite measures and $ f $ a smooth Schwartz function, 
so that there are no convergence issues at any point of the arguments below.  By $ A \lesssim B$ we 
mean $ A< K B$ for an absolute constant $ K$.  By $ A \simeq B$ we mean $ A\lesssim B$ and $ B\lesssim A$.  
We will not try to keep track of constants that depend upon dimension, choices of $ p,\,q$ or  $ \alpha $.

\begin{ack} Two of the  authors completed part of this work while 
participating in a research program at the Centre de Recerca Matem\'atica, 
at the Universitat Aut\`onoma Barcelona,  Spain.  We thank the Centre for their hospitality, and very supportive
environment.
\end{ack}

 %%%%%%%%%%%%%%%%%%%%%%%%%%%%%%
 %%%%%%%%%%%%%%%%%%%%%%%%%%%%%%
\section{Proof of the Weak-Type Inequalities} %\label{s.}

Throughout the proofs of both the strong and weak-type results, we will 
suppress the dependence of the operator $ \operatorname T _{\boldsymbol \tau } = \operatorname T$ upon 
$ \boldsymbol \tau = \{\tau_Q\}$.

%%%%%%%%%%%%%%%%%%%%%%%%%%%%%% SUBSECTION SUBSECTION SUBSECTION SUBSECTION
 %%%%%%%%%%%%%%%%%%%%%%%%%%%%%% SUBSECTION SUBSECTION SUBSECTION SUBSECTION 
\subsection{Proof of the necessity of the testing conditions.}
Let  us assume the  weak-type inequality on $ \operatorname T_{} $.  Set 
$
\mathfrak N \coloneqq \norm \operatorname T _{} (\sigma \cdot ) . L ^{p} (\sigma ) \mapsto L ^{q,\infty } (\omega ). < \infty  
$. 
  By duality  for Lorentz spaces, we then have  
\begin{equation*}
\norm \operatorname T_{} (f \cdot \omega ). L ^{p'} (\sigma ).  \le \mathfrak N \norm f. L ^{q',1} (\omega ).  \,.  
\end{equation*}
Apply this inequality to $ f = \mathbf 1_{Q}$ to see that 
\begin{align*}
\norm \operatorname T_{ Q} (\mathbf 1_{Q} \omega ) . L ^{p'} (\sigma ). \le 
\norm \operatorname T_{} (\mathbf 1_{Q} \omega ) . L ^{p'} (\sigma ). 
 \le \mathfrak N \omega (Q) ^{1/q'} \,. 
\end{align*}
Hence $ \llbracket \sigma , \omega \rrbracket _{ p,q}  ^{\textup{Loc}} \le \mathfrak N$.  
 For the global condition, note that 
 \begin{align*}
\omega (Q) ^{-1/q'} \norm \operatorname T _{ Q} ^{\textup{out}}
(\omega \mathbf 1_{Q}) . L ^{p'} (\sigma ).  
& \le \mathfrak N \omega (Q) ^{-1/q'+1/q'} 
= \mathfrak N\,. 
\end{align*}
Hence, $ \llbracket \sigma , \omega \rrbracket _{ p,q}  ^{\textup{Glo}} \le \mathfrak N$.

%%%%%%%%%%%%%%%%%%%%%%%%%%%%%% SUBSECTION SUBSECTION SUBSECTION SUBSECTION
 %%%%%%%%%%%%%%%%%%%%%%%%%%%%%% SUBSECTION SUBSECTION SUBSECTION SUBSECTION 
\subsection{Proof of the weak-type inequality assuming 
\protect{$ \mathfrak L \coloneqq \llbracket \sigma , \omega \rrbracket _{ p,q}  ^{\textup{Loc}} < \infty $.}}  
We consider the proof that the `local testing condition' implies the  weak-type bound for 
$ \operatorname T _{}$.  

Fix $ f\in L ^{p} (\sigma )$,  smooth with  compact support 
and $ \lambda >0$.  We bound the set $ \{ \operatorname T _{}  (f  \sigma )> 2 \lambda \}$. 
Let $ \mathcal Q _{\lambda }$ be the maximal dyadic cubes in $ \{\operatorname T_{}  (f \sigma ) >
\lambda  \}$ which also intersect the set $ \{\operatorname T_{} (f \sigma )> 2 \lambda \}$. 

Let $  Q ^{(1)}$ denote the parent of a dyadic cube. For  fixed $ Q_0 \in \mathcal Q _{\lambda }$, 
we must have that $Q _0 ^{(1)} $ contains a point $ z$ with $ \operatorname T_{} (f \sigma )
(z)< \lambda $. It follows that 
\begin{align*}
\lambda & >  \operatorname T_{} (f \sigma )
(z) 
\ge \operatorname T _{Q_0^{(1)}} ^{\textup{out}} (f \sigma ) \,.  
\end{align*}
From this, we must have 
\begin{align} \label{e.FI-weak1}
\lambda \le \operatorname T _{Q_0} ^{ \textup{in}} (f \sigma ) (x)\,, 
\qquad x \in Q_0 \cap  \{\operatorname T_{} (f \sigma ) (x)> 2 \lambda \} \,. 
\end{align}
This represents an important localization of the operation $ \operatorname T_{} (f \sigma )$.

Note that we can estimate 
\begin{align}
M & \coloneqq   \sum _{Q\in \mathcal Q _{\lambda }} 
\Bigl[ \frac 1 {\omega (Q)} \int _{Q} \operatorname T_{ Q} ^\textup{in} (f \sigma )\omega  \; dx \Bigr] ^{q} 
\omega (Q) 
\\
& \lesssim 
\sum _{Q\in \mathcal Q _{\lambda }} 
\Bigl[  \int _{Q} f \sigma  \operatorname T_{ Q} ^\textup{in} (\mathbf 1_{Q} \omega ) \; dx \Bigr] ^{q} 
\omega (Q) ^{1-q}
\\
& \lesssim  
\mathfrak L ^{q}
\sum _{Q\in \mathcal Q _{\lambda }}  \Bigl[ \int _{Q} \lvert  f\rvert ^{p} \sigma   \Bigr] ^{q/p} 
\omega (Q) ^{q/q'+1-q}  
\\
& \lesssim 
\mathfrak L ^{q}
\Bigl[\sum _{Q\in \mathcal Q _{\lambda }}   \int _{Q} \lvert  f\rvert ^{p} \sigma   \Bigr] ^{q/p} 
& (p\le q)
\\  \label{e.M<}
& \lesssim 
\mathfrak L ^{q}
\norm f. L ^{p} (\sigma ). ^{q} \,. 
\end{align}
Note that we have used duality to move the (self-dual) operator $ \operatorname T _{\alpha } ^{\textup{in}}$ over to 
the simpler function.

To complete the proof, we will split $ \mathcal Q_\lambda $ into subcollections $ \mathcal E$ and $ \mathcal F$, 
where $ \mathcal E$ consists of those cubes which are `empty' of the set $ \{\operatorname T_{} (f \sigma )
> 2 \lambda \}$, precisely for $  \eta = 2 ^{-q-1}$ 
\begin{equation*}
\mathcal E \coloneqq 
\bigl\{ Q\in \mathcal Q _{\lambda } \;:\; 
\omega ( Q \cap \{\operatorname T_{} (f \sigma )
> 2 \lambda \})< \eta \omega (Q) 
\bigr\}\,, 
\end{equation*}
and $ \mathcal F= \mathcal Q _{\lambda }- \mathcal E $. 
And to conclude the proof, we can estimate, using \eqref{e.M<}, 
\begin{align*}
(2\lambda) ^{q} \omega (\operatorname T_{} (f \sigma ) > 2 \lambda ) 
& 
\le  \eta (2 \lambda ) ^{q} \sum _{Q\in \mathcal E} \omega (Q) 
+ \eta ^{-q} M 
\\
& \le 
\eta 2 ^{q}  \lambda ^{q} \omega (\operatorname T_{} (f \sigma ) >  \lambda ) 
+  C  \eta ^{-q}
\mathfrak L ^{q}
\norm f. L ^{p} (\sigma ). ^{q} \,. 
\end{align*}
Take $ \lambda $ so that the left-hand side of this inequality  
is close to maximal. (The supremum is a finite number by assumption.) 
By choice of $ \eta $, this proves the estimate.

%%%%%%%%%%%%%%%%%%%%%%%%%%%%%% SUBSECTION SUBSECTION SUBSECTION SUBSECTION
 %%%%%%%%%%%%%%%%%%%%%%%%%%%%%% SUBSECTION SUBSECTION SUBSECTION SUBSECTION 
\subsection{Proof of  the weak-type inequality assuming 
\protect{$\mathfrak G \coloneqq \llbracket \sigma , \omega \rrbracket _{ p,q}  ^{\textup{Glo}}< \infty $.}
}
We show that the   `global testing condition' implies the weak-type inequality for the  
fractional integral operator, when $ p<q$.
This proof will depend upon a (clever) comparison to a maximal function.    
We proceed with the initial steps of the previous proof, up until \eqref{e.FI-weak1}.

We rewrite the  sum in \eqref{e.FI-weak1} in a way that permits our application of the 
`global' testing condition. 
Inductively define $ Q_k$ containing $ x$ as follows. The cube $ Q_0$  and $ x$ are  as \eqref{e.FI-weak1} above, 
and given $ Q_k\subset Q_0$, take $ Q_{k+1}$ to be the maximal dyadic cube containing $ x$ that satisfies 
$ \omega (Q _{k+1})\le \tfrac 12 \omega (Q_k)$.  Then, we have, continuing from \eqref{e.FI-weak1}, 
\begin{align}\label{e.FI-weak2}
\lambda & \le  \sum _{k=0} ^{\infty } 
\sum _{\substack{Q \;:\; x\in Q\\ Q _{k+1}\subsetneqq  Q\subset Q_k }}  \tau_Q \lvert  Q\rvert ^{ -1} 
\int _{Q} f \; \sigma dy 
\\ & 
\le \sum _{k=0} ^{\infty }  \int _{Q_0}  \Biggl\{ \sum _{\substack{Q \;:\; x\in Q\\ Q _{k+1}\subsetneqq  Q\subset Q_k }}   
\tau_Q \lvert  Q\rvert ^{ -1}  \mathbf 1_{Q}\Biggr\} f \; \sigma dy
\\
& \lesssim 
\sum _{k=0} ^{\infty }  \int _{Q_0}   \omega (Q^{(1)} _{k+1}) ^{-1} \operatorname T _{ Q^{(1)} _{k+1}} ^{\textup{out}}
(\omega \mathbf 1_{Q^{(1)}_{k+1}})\cdot 
(f \mathbf 1_{Q _{k}}) \; \sigma dy    
\\
& \lesssim 
\sum _{k=0} ^{\infty }  \omega (Q^{(1)} _{k+1}) ^{-1}  \norm  \operatorname T _{Q^{(1)} _{k+1}} ^{\textup{out}}
(\omega \mathbf 1_{ Q^{(1)}_{k+1}}) . L ^{p'} (\sigma ).  
\Bigl(\int _{ Q _k} f ^{p} \sigma  \Bigr) ^{1/p} 
\\  
& \lesssim 
\mathfrak G
\sum _{k=0} ^{\infty }  \omega ( Q^{(1)} _{k+1} ) ^{-1/q}  
\Biggl(   \int _{ Q_k} f ^{p} \; \sigma \Biggr) ^{1/p} 
\\  
& \lesssim 
\mathfrak G
\sum _{k=0} ^{\infty }    \omega (Q_k) ^{1/p}\omega ( Q^{(1)} _{k+1} ) ^{-1/q}  
\Biggl(  \omega (Q_k) ^{-1} \int _{ Q_k} f ^{p} \; \sigma \Biggr) ^{1/p} 
\\  \label{e.pass}
& \lesssim 
\mathfrak G
\omega (Q_0) ^{1/p-1/q} \overline{\operatorname M} f (x) \,. 
\end{align}
In the last inequality,  we define the maximal function $ \overline{\operatorname M}$ as follows. 
\begin{equation*}
\overline{\operatorname M} f (x) \coloneqq  
\sup _{Q \;:\; Q \subset Q_0} \mathbf 1_{Q} (x) \Bigl[ \omega (Q) ^{-1} \int _{Q} f ^{p} \; \sigma  \Bigr] ^{1/p}\,. 
\end{equation*}
This is a localized maximal function, with both weights involved in the definition.    
In passing to \eqref{e.pass}, we should note that we are certainly using the strict inequality $ p<q$: 
By construction, $ \omega (Q^{(1)} _{k+1})\ge \tfrac 12 \omega (Q_k)$, so that 
\begin{align*}
\sum _{k=0} ^{\infty }    \omega (Q_k) ^{1/p}\omega ( Q^{(1)} _{k+1} ) ^{-1/q}  
& \lesssim 
\sum _{k=0} ^{\infty }    \omega (Q_k) ^{1/p-1/q} \lesssim  \omega (Q_0) ^{1/p-1/q} \,. 
\end{align*}

\medskip 

The conclusion of these calculations is that for maximal dyadic $ Q_0\subset \{\operatorname T_{} (f \sigma )>
\lambda \}$,  and $ x\in Q_0\cap \{\operatorname T_{} (f \sigma )> 2 \lambda \}$, we have 
\begin{equation*}
\lambda \le c 
\mathfrak G 
\omega (Q_0) ^{1/p-1/q} \overline{\operatorname M} f (x) \,.  
\end{equation*}
We proceed with an estimate for $ \omega ( Q_0 \cap \{\operatorname T_{} (f \sigma )> 2 \lambda \} )$.  

Take $ \mathcal P_0$ to be the maximal dyadic cubes $ Q\subset Q_0$ so that 
\begin{align*}
\lambda  & \le c 
\mathfrak G 
\omega (Q_0) ^{1/p-1/q} \Biggl[ \omega (Q) ^{-1} \int _{Q} f ^{p} \; \sigma  \Biggr] ^{1/p} \,, 	  
\\
\noalign{\noindent {or,  what is the same}}
\omega (Q) & \le c 
\mathfrak G^p 
\lambda ^{-p} \omega (Q_0)  ^{1-1/q} \int _{Q} f ^{p} \; \sigma  \,. 
\end{align*}
And this permits us to estimate 
\begin{align*}
\lambda ^{q} \omega \bigl(  Q_0 \cap \{\operatorname T_{} (f \sigma )> 2 \lambda \} \bigr) 
& \le \lambda ^{q} \sum _{Q\in \mathcal P_0} \omega (Q) 
\\
& \lesssim  
\mathfrak G^p 
\lambda ^{q-p} \omega (Q_0) ^{1-p/q} \sum _{Q\in \mathcal Q_0} \int _{Q} f ^{p}  \; \sigma 
\\
& \lesssim 
\mathfrak G^p 
\lambda ^{q-p} \omega(Q_0) ^{1-p/q}  \int _{Q_0} f ^{p}  \; \sigma  \,. 
\end{align*}

\medskip

We have to this moment been working with a single maximal $ Q_0\subset \{\operatorname T_{} (f \sigma )>\lambda
\}$ which also meets the set 
$ \{\operatorname T_{} (f \sigma )>
2\lambda \}$.
Let $ \mathcal Q_0$ be the collection of all such $ Q_0$.  We can estimate 
\begin{align}
(2 \lambda) ^{q} \omega (\operatorname T _{\alpha } (f \sigma )> 2 \lambda ) 
&\lesssim  
\mathfrak G^p 
\lambda ^{q-p} 
\sum _{Q_0\in \mathcal Q_0} \omega (Q_0) ^{1-p/q} \int _{Q_0} f ^{p} \; \sigma  
\\
& \lesssim \mathfrak G^p  
\lambda ^{q-p}  
\Biggl[\sum _{Q_0\in \mathcal Q_0} \omega (Q_0) \Biggr] ^{1-p/q} 
\cdot 
\Biggl[ \sum _{Q_0\in \mathcal Q_0} \Bigl(\int _{Q_0} f ^{p} \sigma  \Bigr) ^{q/p} \Biggr] ^{p/q} 
\\  \label{e.WFI-i}
& \lesssim  \mathfrak G^p 
\bigl[ \lambda ^{q} \omega (\operatorname T_{} (f \sigma )> \lambda ) \bigr] ^{1- p/q} 
\int  f ^{p} \; \sigma  \,. 
\end{align}

  Apply \eqref{e.WFI-i} with a choice of $ \lambda  $ so that the 
left-hand side is close to maximal.  It follows that we have 
\begin{equation*}
[\lambda ^{q} \omega (\operatorname T _{\alpha } (f \sigma )> 2 \lambda) ] ^{p/q} 
\lesssim \mathfrak G^p 
\int  f ^{p} \; \sigma  \,. 
\end{equation*}
And this completes the proof.

%%%%%%%%%%%%%%%%%%%%%%%%%%%%%% SECTION  SECTION SECTION
%%%%%%%%%%%%%%%%%%%%%%%%%%%%%% SECTION  SECTION SECTION 
\section{Proof of Sawyer's Two Weight Norm Result} %\label{s.} 

%%%%%%%%%%%%%%%%%%%%%%%%%%%%%% SUBSECTION SUBSECTION SUBSECTION SUBSECTION
 %%%%%%%%%%%%%%%%%%%%%%%%%%%%%% SUBSECTION SUBSECTION SUBSECTION SUBSECTION 
\subsection{Linearizations of Maximal Functions}%\label{ss.}

The maximal theorem Theorem~\ref{t.max}, giving universal bounds on the maximal function $ \operatorname M _{\omega }$,
will be an essential tool, arising in proof 
of the sufficiency of the testing conditions below.  It will arise in a `linearized' form. 
By this we mean the usual way to pass from a sub-linear maximal operator to a linear one, 
which for $ \operatorname M _{\omega }$ means the following.  

Let $ \{E (Q) \;:\; Q\in \mathcal Q\}$ be any selection of measurable disjoint sets $ E (Q)\subset Q$ indexed 
by the dyadic cubes. Define a 
corresponding linear operator $ \operatorname L$ by 
\begin{equation}\label{e.Lop}
\operatorname L f (x) \coloneqq \sum _{Q\in \mathcal Q} \mathbf 1_{E (Q)} (x) \mathbb E ^{\omega } _{Q} f \,. 
\end{equation}
Then, \eqref{e.max} is equivalent to the bound $ \norm \operatorname L f . L ^{p} (\omega ). \lesssim \norm f. L ^{p} (\omega ).$ 
with implied constant independent of $ w$ and the sets $ \{E (Q) \;:\; Q\in \mathcal Q\}$.

%%%%%%%%%%%%%%%%%%%%%%%%%%%%%% SUBSECTION SUBSECTION SUBSECTION SUBSECTION
 %%%%%%%%%%%%%%%%%%%%%%%%%%%%%% SUBSECTION SUBSECTION SUBSECTION SUBSECTION 
\subsection{Initial Considerations. Whitney Decomposition.}%\label{ss.}
In this proof we will only explicitly use the  `local' testing conditions, which is sufficient to 
deduce the Theorem as the previous arguments 
show that the `local' and `global' conditions are equivalent, in the case of $ 1<p<q<\infty $.  
Let us set 
\begin{align} \label{e.L}
\mathfrak L           \coloneqq \llbracket \sigma , \omega \rrbracket _{ p,q}  ^{\textup{Loc}} \,, 
\qquad &
\mathfrak L _{\ast } \coloneqq \llbracket  w, \sigma  \rrbracket _{ q',p'}  ^{\textup{Loc}} \,. 
\end{align}
There is a very useful strengthening of the assumption that we can exploit, due to the fact that we have 
already proved the weak-type results, namely Theorem~\ref{t.dyadicWeakSawyer}.  Due to 
\eqref{e.WWeak1}, we have 
\begin{equation}\label{e.L'}
\sup _{Q\in \mathcal Q}  \omega (Q) ^{-1/q'} \norm \operatorname T _{  } (\mathbf 1_{Q} \omega ). L ^{p'} (\sigma ). 
\lesssim \mathfrak L \,. 
\end{equation}

We take $ f$ to be a finite combination of indicators of dyadic cubes. 
We work with the sets 
$
\Omega _k = \{ \operatorname T_{}  (f \sigma )> 2 ^{k}\} 
$, 
which are open, and begin by making a Whitney-style decomposition of all of these sets. 

Let $ Q ^{(1)}$ denote the parent of $ Q$, and inductively define $ Q ^{(j+1)}= (Q ^{(j)})^{(1)}$.  
For an integer $ \rho \ge 2$, we should choose collections $ \mathcal Q _k$ of disjoint dyadic cubes so that these 
several conditions are met. 
\begin{align}\label{e.dc}
\Omega _k = \bigcup _{Q\in \mathcal Q_k} Q  & & \textup{(disjoint cover)}
\\ \label{e.Whit}
Q^{(\rho)} \subset \Omega _k\,, \ Q^{(\rho+1)}\cap \Omega _k ^{c} \neq \emptyset & & \textup{(Whitney condition)}
\\ \label{e.fo}
\sum _{Q\in \mathcal Q_k} \mathbf 1_{Q^{(\rho)}} \lesssim \mathbf 1_{\Omega _k} 
& &\textup{(finite overlap)}
\\ \label{e.crowd}
\sup _{Q\in \mathcal Q_k} {} ^{\sharp} \{Q'\in \mathcal Q_k \;:\; Q'\cap Q ^{ (\rho)}\neq \emptyset \} \lesssim 1 \,, 
& &  \textup{(crowd control)}
\\ \label{e.nested}
Q\in \mathcal Q_k\,,\ Q'\in \mathcal Q_l \,,\ Q\subsetneqq Q'\quad \textup{implies } \quad 
k> l \,. & &\textup{(nested property)}
\end{align}
We will prove this for arbitrary $ \rho $, but take $ \rho =1$ in the proof. 

%%%%%%%%%%%%%%%%%%%%%%%%%%%%%% P\rho OOF P\rho OOF P\rho OOF
\begin{proof}
Take $ \mathcal Q_k$ to be the maximal dyadic cubes $ Q\subset \Omega _k$ which satisfy 
\eqref{e.Whit}. Such cubes are disjoint and \eqref{e.dc} holds.   As the sets $ \Omega _k$ are themselves nested, 
\eqref{e.nested} holds.   

\smallskip 
Let us  show that \eqref{e.fo} holds.
Note that holding the volume of the cubes constant we have 
\begin{equation*}
\sum _{\lvert  Q\rvert= 1} \mathbf 1_{Q^{(\rho)}} \le 2 ^{\rho d}
\end{equation*}
where $ d$ is the dimension.  So if we take an integer $ \rho $, and assume that for some $ k$ and $ x\in \mathbb R ^{d}$
\begin{equation*}
\sum _{Q\in \mathcal Q_k} \mathbf 1_{Q^{(\rho)}} (x) \ge 8 \cdot 2 ^{(\rho+1) d}\,, 
\end{equation*}
then we can choose $ Q, R\in \mathcal Q_k$ with $ x\in Q^{(\rho)}\cap R^{(\rho)}$ and the side-length of $ R$ satisfies 
$ \lvert  R\rvert ^{1/d} \le 2 ^{-3} \lvert  Q\rvert ^{1/d}  $.   But then it will follow that 
$ R^{(\rho+1)}\subset Q^{(\rho)} $.  We thus see that $ R^{(\rho+1)}$ does not meet 
$ \Omega _k ^{c}$, which is a contradiction. 

\smallskip 
Let us see that \eqref{e.crowd} holds.  Fix $ Q\in \mathcal Q_k$.  If we had $ Q'\supsetneqq Q ^{(\rho)}$ 
for any $ Q'\in \mathcal Q_k$, we would violate \eqref{e.Whit}.  Thus, we must have $ Q'\subset Q ^{(\rho)}$, 
and these cubes Q' are disjoint. Suppose that there were more than $ 2 ^{\rho +2}$ in number.  Then, there would have 
to be a $ Q' \subset Q ^{(\rho )}$ with $ \lvert  Q'\rvert\le 2 ^{-\rho -1} \lvert  Q ^{(\rho )}\rvert  $.  
That is, $ (Q') ^{(\rho +1)}\subset Q ^{(\rho )}$, violating the Whitney condition \eqref{e.Whit}.

\end{proof}
%%%%%%%%%%%%%%%%%%%%%%%%%%%%%% P\rho OOF P\rho OOF P\rho OOF

Let us comment on a subtle point that enters in a decisive way at the end of the proof, see Proposition~\ref{p.bounded}.  
A given cube $ Q$ can be a member of an unbounded number of $ \mathcal Q_k$. Namely, there 
are integers $ K _{-} (Q)\le K _{+} (Q)$ so that 
\begin{equation}\label{e.many}
Q\in \mathcal Q_k\,, \qquad K _{-} (Q)\le k \le  K _{+} (Q)\,, 
\end{equation}
and there is no \emph{a priori} upper bound on $ K _{+} (Q)- K _{-} (Q)$.

%%%%%%%%%%%%%%%%%%%%%%%%%%%%%% SUBSECTION SUBSECTION SUBSECTION SUBSECTION
 %%%%%%%%%%%%%%%%%%%%%%%%%%%%%% SUBSECTION SUBSECTION SUBSECTION SUBSECTION 
\subsection{Maximum Principle. Decomposition of \protect{$ \norm \operatorname T   f. L ^{p} (\omega ). ^{p}$.}}
There is an important maximum principle  which will serve to further localize the operation $ \operatorname T  $.  
For all $ k$ and $ Q\in \mathcal Q_k$ we have 
\begin{equation}\label{e.maxPrin}
 \max\bigl\{ 
 \operatorname T_{Q^{(1)}} ^{\textup{out}} (f \mathbf 1_{Q ^{(2)}} \sigma ) (x)
 \,,\, \operatorname T_{}  (\mathbf 1_{(Q^{(2)}) ^{c}} f \sigma ) (x) 
\bigr\} \le 2 ^{k} 
\,, \qquad x\in Q\,.
\end{equation}
%%%%%%%%%%%%%%%%%%%%%%%%%%%%%% PROOF PROOF PROOF
\begin{proof}
We can choose 
 $ z\in Q^{(2)} \cap \Omega _{k} ^{c}$, which exists by \eqref{e.Whit}. Then, for $ x\in Q$
\begin{equation*}
\operatorname T_{}  (\mathbf 1_{(Q^{(2)}) ^{c}} f \sigma ) (x) 
= \operatorname T_{Q^{(1)}} ^{\textup{out}} (\mathbf 1_{(Q^{(2)}) ^{c}} f \sigma ) (x)  
\le \operatorname T_{} (f \sigma ) (z) \le 
2 ^{k}\,. 
\end{equation*}
Also, it is clear that 
$ \operatorname T_{Q^{(1)}} ^{\textup{out}} (f \mathbf 1_{Q ^{(2)}} \sigma ) (x) 
\le \operatorname T_{} (f \sigma ) (z) \le 2 ^{k}$.  
\end{proof}
%%%%%%%%%%%%%%%%%%%%%%%%%%%%%% PROOF PROOF PROOF

Let us set  $ m=5$. We will use this integer throughout the remainder of the proof. 
Define the  sets 
\begin{equation}\label{e.EQ}
E_k(Q) \coloneqq Q \cap (\Omega _{k+m-1} - \Omega _{k+m})\,, \qquad Q\in \mathcal Q_k \,. 
\end{equation}
It is required to include the subscript $ k$ here, and in other places below, due to \eqref{e.many}. 
See Figure~\ref{f.EkQ} for an illustration of this set.

%%%%%%%%%%%%%%% Figure
\begin{figure}
\begin{center}
\includegraphics[scale=.8]{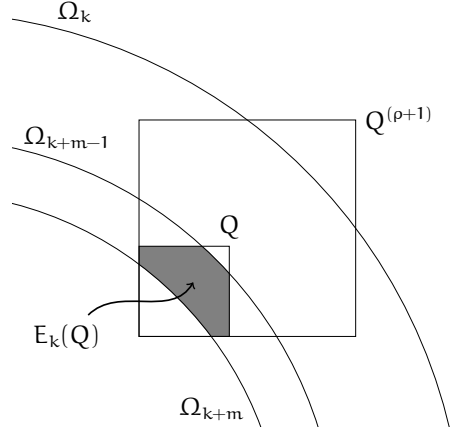}
\end{center}  
\caption{The set $ E_k (Q)$.}   
\label{f.EkQ}
\end{figure}
%%%%%%%%%%%%%%% Figure

Now, the Maximum Principle, the equality \eqref{e.T=in+out}, and choice of $ m$ gives us for $ x\in E_k(Q)$
\begin{align} \label{e.MPfig}
 \operatorname T_{Q^{(1)}} ^{\textup{in}}  (\mathbf 1_{Q^{(2)}} f \sigma ) (x) 
& = \operatorname T_{}  (f \sigma ) (x) - 
 \operatorname T_{Q^{(1)}} ^{\textup{out}} (f \mathbf 1_{Q ^{(2)}} \sigma ) (x)
 - \operatorname T_{}  (\mathbf 1_{(Q^{(2)}) ^{c}} f \sigma ) (x) 
\\
& \ge 2 ^{k+m-1} - 2 ^{k+1}\ge 2 ^{k} \,. 
\end{align}
%See Figure~\ref{f.MPfig}.  
We should make one more observation.  By the definition of $ \operatorname T ^{\textup{in}}$,
we have 
\begin{equation*}
\operatorname T_{Q^{(1)}} ^{\textup{in}}  (\mathbf 1_{Q^{(2)}} f \sigma ) (x) 
= \operatorname T_{Q^{(1)}} ^{\textup{in}}  (\mathbf 1_{Q^{(1)}} f \sigma ) (x) \,, 
\qquad x\in Q \,. 
\end{equation*}
On the right, we replace the cube $ Q ^{(2)}$ inside $ \operatorname T $ with $ Q ^{(1 )}$. 
This will be useful for us as it will, at a moment below, place the crowd control principle \eqref{e.crowd}
at our disposal.

% %%%%%%%%%%%%%%% Figure
% \begin{figure}
% \begin{center}
% \includegraphics[scale=.7]{}
% \end{center}
% \caption{The derivation of \eqref{e.MPfig}.}  
% \label{f.MPfig}
% \end{figure}
% %%%%%%%%%%%%%%% Figure

This permits us the following calculation which is basic to the organization of the proof.  
\begin{align}
2 ^{k}\omega (E_k(Q)) & \le \int _{E_k(Q)}
\operatorname T_{Q^{(1)}} ^{\textup{in}} 
(\mathbf 1_{Q^{(1)} } f \sigma )\;  w 
\\
& = \int _ {Q^{(1)}} f \cdot\operatorname T_{Q^{(1)}} ^{\textup{in}} 
(\mathbf 1_{E_k(Q)} \omega ) \; \sigma  
\\
& =  \alpha_k(Q) + \beta_k(Q) \,, 
\\ \label{e.sigma}
\alpha_k(Q) &\coloneqq 
\int _{Q^{(1)}\backslash \Omega _{k+m}} f \cdot 
\operatorname T_{Q^{(1)}} ^{\textup{in}}  (\mathbf 1_{E_k(Q)} \omega ) \; \sigma \,, 
\\ \label{e.tau}
\beta_k(Q) & \coloneqq 
\int _{Q^{(1)} \cap \Omega _{k+m}} f \cdot 
\operatorname T_{Q^{(1)}} ^{\textup{in}}  (\mathbf 1_{E_k(Q)} \omega ) \; \sigma \,.
\end{align}
It is the term $ \beta _k (Q)$ that leads to the (much) harder term.

And then, we can estimate 
\begin{align}
\int \lvert  \operatorname T_{}  (f \sigma )\rvert ^{q} \;\omega
& \le 2 ^{mq} \sum _{k=-\infty } ^{\infty }   2 ^{kq}\omega (\Omega _{k+m-1}- \Omega _{k+m}) 
\\
& = 2 ^{mq} \sum _{k=-\infty } ^{\infty } \sum _{Q\in \mathcal Q _k} 2 ^{kq}\omega ( E_k(Q))  
\\ \label{e.SSS}
& = 2 ^{mq} \sum _{j=1} ^{3} S_j \,. 
\end{align}
The last three sums are defined by a choice of $ 0< \eta <1$ and 
\begin{align}\label{e.S}
S_j & \coloneqq \sum _{k=-\infty } ^{\infty } \sum _{Q\in \mathcal Q_k ^{j}} 2 ^{kq}\omega ( E_k(Q))  \,, 
\qquad j=1,2,3\,,
\\   \label{e.Q1}
\mathcal Q _k ^{1} & \coloneqq  
\{ Q\in \mathcal Q_k \;:\;  \omega (E_k(Q))\le \eta \omega (Q) \}\,, 
\\ \label{e.Q2}
\mathcal Q _k ^{2} & \coloneqq  
\{ Q\in \mathcal Q_k \;:\;  \omega (E_k(Q))> \eta \omega (Q)\,,\ 
\alpha_k(Q)> \beta_k(Q) 
\}\,, 
\\ \label{e.Q3}  
\mathcal Q_k ^{3} & \coloneqq \mathcal Q_k-\mathcal Q_k ^{1} - \mathcal Q_k ^{2} \,. 
\end{align}
Here, let us note that $ \mathcal Q _k ^{1}$ consists of those $ Q \in \mathcal Q_k$ such that 
$ E _{k} (Q)$ is `empty,' and these terms will be handled much as they were in the weak-type argument. 
Using the notation of \eqref{e.many},  observe that 
\begin{equation}\label{e.many<}
^{\sharp} \{ K _{-} (Q)\le k\le K _{+} (Q) \;:\;  Q \in \mathcal Q_k \backslash \mathcal Q _{k} ^{1}\} \le  \eta ^{-1}   \,.  
\end{equation}
This follows from the definition of $ \mathcal Q ^{1}_k$, and that the sets $ E _{k } (Q)$
are pairwise disjoint in $ k$. 
This point enters in Proposition~\ref{p.bounded} below.

We will bound each of the $ S_j$ in turn.  In fact, recalling \eqref{e.L}, we show that 
\begin{align}\label{e.Q1<}
S_1 & \lesssim \eta \norm   \operatorname T_{}  (f \sigma ). L ^{q} (\omega ). ^{q}
\\ \label{e.Q2<} 
S_2  &\lesssim \eta ^{-q}  \mathfrak L  ^{q} \norm  f . L ^{p}(\sigma). ^{q}  
\\ \label{e.Q3<}
S_3 & \lesssim \eta ^{-q}  \bigl[ \mathfrak L ^{q} +\mathfrak L  _{\ast } ^{q}\bigr] 
\norm  f\rvert . L ^{p}(\sigma). ^{q} \,.
\end{align}
Thus, the term $ S_2$ requires the weak-type testing condition, while $ S_3$ requires both testing conditions.

%%%%%%%%%%%%%%% Figure
\begin{figure}
\begin{center}
\includegraphics[scale=1]{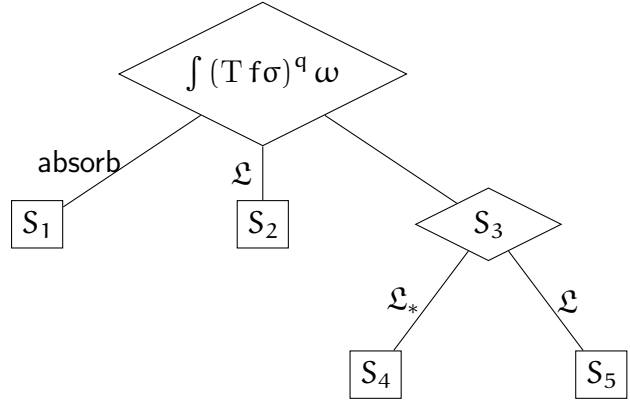}
\end{center}
\caption{Schematic Tree for the proof of the strong type inequality.}  
\label{f.schematic}
\end{figure}
%%%%%%%%%%%%%%% Figure

This permits us to estimate 
\begin{equation*}
 \int \lvert  \operatorname T_{}  (f \sigma )\rvert ^{q} \;\omega  
 \lesssim \eta  \cdot 
\norm   \operatorname T_{}  (f \sigma ). L ^{q} (\omega ). ^{q}+ 
  \eta ^{-q} \bigl[ \mathfrak L ^{q} + \mathfrak L _{\ast } ^{q}\bigr] \cdot 
 \norm  f\rvert . L ^{p}(\sigma). ^{q} \,. 
\end{equation*}
The selection of $ \eta $ is independent of the selection of $ m$ (which is after all specified). 
So for small $ 0<\eta <1 $, we can absorb the first term on the right into the left-hand side, 
proving our Theorem.

We include a  schematic tree of the 
proof in Figure~\ref{f.schematic}.   Concerning this figure we make these comments. 
%%  ITEMIZE
\begin{itemize}
\item  Terms in diamonds are further decomposed, 
while 
those in boxes are final estimates. 
\item 
The testing conditions used to control each final estimate are indicated on the 
edges. The label `absorb' on $ S_1$ indicates that this term is absorbed into the main term.
\item $ S_3$ is the hard term, requiring further decomposition, guided by 
the  introduction of the `principal cubes,' see Remark~\ref{r.pc}, 
and some delicate combinatorics, see Proposition~\ref{p.bounded}.
\end{itemize}
%% ITEMIZE

%%%%%%%%%%%%%%%%%%%%%%%%%%%%%% SUBSECTION SUBSECTION SUBSECTION SUBSECTION
 %%%%%%%%%%%%%%%%%%%%%%%%%%%%%% SUBSECTION SUBSECTION SUBSECTION SUBSECTION 
\subsection{Two Easy Estimates.}%\label{ss.}

Now, the estimates \eqref{e.Q1<} for $ S_1$ and \eqref{e.Q2<} for $ S_2$ are reasonably straight forward, 
but more involved for $ S_3$.  Let us bound $ S_1$.  By the definition in 
\eqref{e.Q1},   the sets $ E_k(Q)$ are nearly empty.  
\begin{align}
S_1 & =  \sum _{k=-\infty } ^{\infty } \sum _{Q\in \mathcal Q_k ^{1} } 2 ^{kq} \omega(E_k(Q)) 
\\
& \le \eta 
\sum _{k=-\infty } ^{\infty } \sum _{Q\in \mathcal Q_k ^{1} } 2 ^{kq} \omega(Q)  
\\ \label{e.S1<}
& \le \eta 
\sum _{k=-\infty } ^{\infty } 2 ^{kq} \omega ( \{\operatorname T_{}  (f \sigma )> 2 ^{k}\}) 
\\
& \lesssim \eta \cdot \norm   \operatorname T_{}  (f \sigma ). L ^{q} (\omega ). ^{q}
\end{align}
Here, we have used the condition \eqref{e.dc}. 
\smallskip 

Let us turn to $ S_2$. 
The defining condition in \eqref{e.Q2} is that 
\begin{align*}
\eta  2 ^{k}\omega (Q) & \le 2 ^{k} \omega (E_k(Q)) 
\\
& \lesssim  \alpha_k(Q)
\\
& = \int _{Q^{(1)}\backslash \Omega _{k+m} } f \cdot  \operatorname T_{Q^{(1)}} ^{\textup{in}}  (\mathbf 1_{E_k(Q)} \omega ) \; \sigma 
\\
& \le 
\Bigl[\int _{Q^{(1)}\backslash \Omega _{k+m} } f ^{p} \; \sigma   \Bigr] ^{1/p} 
\cdot 
\Bigl[ \int _{Q^{(1)}} \bigl( \operatorname T_{Q^{(1)}} ^{\textup{in}}  (\mathbf 1_{E_k(Q)} \omega ) \bigr) ^{p' }\; \sigma \Bigr] ^{1/p'} 
\\
& \le \mathfrak L 
\Bigl[\int _{Q^{(1)}\backslash \Omega _{k+m} } f ^{p} \; \sigma   \Bigr] ^{1/p} 
\cdot  \omega (Q) ^{1/q'} \,. 
\end{align*}
We have used the weak-type testing condition, and in particular \eqref{e.L'}. The estimate we use from this is 
\begin{equation*}
2 ^{k} \lesssim \mathfrak L  
\eta ^{-1} \omega (Q) ^{-1/q}  \Bigl[\int _{Q^{(1)}\backslash \Omega _{k+m} } f ^{p} \; \sigma   \Bigr]
^{1/p} \,. 
\end{equation*}

Using this estimate, we can finish the estimate for $ S_2$.  
\begin{align}
S_2 & = 
\sum _{k=-\infty } ^{\infty } \sum _{Q\in \mathcal Q_k ^{2}}   2 ^{kq} \omega(E_k(Q))
\\
& \lesssim  \eta ^{-q} \mathfrak L  ^{q}
\sum _{k=-\infty } ^{\infty } \sum _{Q\in \mathcal Q_k ^{2}}  
\frac {\omega (E_k(Q))} {\omega (Q)}
\Bigl[\int _{Q^{(2)}\backslash \Omega _{k+m} } f ^{p} \; \sigma   \Bigr] ^{q/p} 
\\ \label{e.S2<}
& \lesssim \eta ^{-q} 
 \mathfrak L ^{q}
\Bigl[
\int f ^{p} 
\sum _{k=-\infty } ^{\infty } \sum _{Q\in \mathcal Q_k ^{2}}  \mathbf 1_{Q^{(2)}\backslash \Omega _{k+m} } 
\;  \sigma \Bigr] ^{q/p} \qquad \qquad  (q/p\ge 1)
\\
& \lesssim \eta ^{-q}
 \mathfrak L  ^{q} 
 \Bigl[
\int f ^{p} 
\;  \sigma \Bigr] ^{q/p}\,. 
\end{align}
Here, the $ \Omega _{k}$ are decreasing sets, so the sum over $ k$ above is bounded by $ m=5$.  
This completes the proof of \eqref{e.Q2<} for $ S_2$. 

%%%%%%%%%%%%%%%%%%%%%%%%%%%%%% SUBSECTION SUBSECTION SUBSECTION SUBSECTION
 %%%%%%%%%%%%%%%%%%%%%%%%%%%%%% SUBSECTION SUBSECTION SUBSECTION SUBSECTION 
\subsection{The Difficult Case, Part 1.}%\label{ss.}
We turn to the last and most difficult case, namely the estimate for \eqref{e.Q3<}.  
This subsection will introduce the essential tools for the analysis of this term, namely the 
collections $ \mathcal R_k (Q)$ in \eqref{e.RQ} and the `principal cubes' construction, see the 
paragraph around \eqref{e.PC1}.

For integers $ 0\le M<m$ we will show that 
\begin{equation}\label{e.3MN}
S _{3,M} \coloneqq 
\sum _{\substack{k \equiv M \; \textup{mod} m}} 
\sum _{Q\in \mathcal Q_k ^{3}} 2 ^{kq} \omega(E_k(Q)) \lesssim 
\bigl\{\mathfrak L + \mathfrak L _{\ast }  \bigr\}
^{q} 
\eta ^{-q}\norm f. L ^{p} (\sigma ) . ^{q} \,, 
\end{equation}
where the implied constant is independent of $  M$ and $ N$.  Summing over 
$ M$  will prove \eqref{e.Q2<} for $ S_3$. 
It is the standing assumption for the remainder of the proof of \eqref{e.3MN} that $ k\equiv M \mod m$.

This collection of cubes is important for us. 
\begin{equation} \label{e.RQ}
\mathcal R_k (Q) \coloneqq 
\{R\in \mathcal Q _{k+m} \;:\; Q^{(1)} \cap R\neq \emptyset \}\,, \qquad Q\in \mathcal Q_k ^{3}\,. 
\end{equation}
Recall that the  set we are integrating over in $ \beta _{k} (Q)$
 is $ Q ^{ (1 )}\cap \Omega _{k+m}$, \eqref{e.tau}.  Now, 
 for $ R\in \mathcal R _{k} (Q)$, we have 
$ R\subset Q ^{(1 )}$.  Indeed, if this is not the case, we have 
$ Q ^{(1 )}\subsetneq Q ^{(2)}\subset R\subset \Omega _{k+m}$, so that we have violated \eqref{e.Whit}.  
Thus, we can write 
 \begin{equation*}
 Q ^{ (1 )}\cap \Omega _{k+m} = \bigcup _{R\in \mathcal R _{k} (Q)} R \,. 
\end{equation*}

In addition, for 
$ R\in  \mathcal R_k (Q)$, we must have that $ R^{(1)}\subset \Omega _{k+m}$, 
by the Whitney condition \eqref{e.Whit}.
Hence $ R^{(1)}\cap E_k(Q) = \emptyset $. See the definition of $ E_k(Q)$ in \eqref{e.EQ}. 
It follows that we have 
\begin{equation} \label{e.constant}
\mathbf 1_{R} (x)
\operatorname T_{Q^{(1)}} ^{\textup{in}} ( \mathbf 1_{E_k(Q)} \omega ) (x) 
= \mathbf 1_{R} (x)
\sum _{ \substack{ P\in \mathcal Q\\  R ^{(1 )} \subsetneqq P \subset Q ^{(1 )} }} 
\tau _P  \cdot \mathbb E _{P} ( \mathbf 1_{E_k(Q)} \omega ) \,. 
\end{equation}
In particular, the right hand side is independent of $ x\in  R $.  
Putting these observations together, we see that 
\begin{align}
\beta_k(Q) &= 
\sum _{R\in \mathcal R_k (Q)} 
\int _{R} 
f \cdot \operatorname T_{Q^{(1)}} ^{\textup{in}}  (\mathbf 1_{E_k(Q)} \omega ) \; \sigma
\\ \label{e.tau1}
& = 
\sum _{R\in \mathcal R_k (Q)} 
\int _{R}\operatorname T_{Q^{(1)}} ^{\textup{in}}   (\mathbf 1_{E_k(Q)} \omega ) \; \sigma 
\cdot  \mathbb E _{R} ^{\sigma }f \,. 
\end{align}

The maximal function $ \operatorname M _{\sigma } f$ has appeared in the last display, 
in the guise of the average $ \mathbb E _{R} ^{\sigma }f$. 
We proceed with the construction of the so-called `principal cubes.' 
This construction consists of a subcollection $ \mathcal G \subset 
\bigcup _{\substack{k \equiv M \; \textup{mod} m \\ k\ge -N}} \mathcal Q _{k}$ satisfying these two 
properties: 
\begin{gather}\label{e.PC1}
\forall\ Q\in \bigcup _{\substack{k \equiv M \; \textup{mod} m \\ k\ge -N}} \mathcal Q _{k} 
\ \ \exists G\in \mathcal G \quad \textup{so that } \quad    Q\subset G \quad \textup{and} \quad 
\mathbb E _{Q} ^{\sigma } f  \le 2  \mathbb E _{G} ^{\sigma } f \,,  
\\ \label{e.PC2}
G, G'\in \mathcal G\,, \ G\subsetneqq  G' \ \quad \textup{implies} \quad 
2\mathbb E _{G'} ^{\sigma } f < \mathbb E _{G} ^{\sigma } f \,. 
\end{gather}
It is easy to recursively construct this collection.  
Let $ \Gamma (Q)$ 
be the minimal element of $ \mathcal G$ which contains it. (So $ \Gamma (Q)$ is the `father' of $ Q$ 
in the collection $ \mathcal G$.) It follows by construction that 
$ \mathbb E _{Q}^{\sigma} f \le 2 \mathbb E _{\Gamma (Q)}^{\sigma} f$ for all $ Q$.  
A basic property of this construction, which we rely upon below is that 
\begin{equation}\label{e.PCgeo} 
\sum _{G\in \mathcal G} \mathbf 1_{G} (x) \mathbb E _{G} ^{\sigma }f  \lesssim \operatorname M _{\sigma } f  (x)\,. 
\end{equation}
Indeed, for each fixed $ x$, the terms in the series on the left are growing at least geometrically, by 
\eqref{e.PC2}, whence the sum on the left is of the order of its largest term, proving the inequality.  
It follows from \eqref{e.Lop}, that we have 
\begin{equation}\label{e.PCmax}
\sum _{G\in \mathcal G} \sigma (G) \lvert \mathbb E _{G} ^{\sigma }f\rvert ^{p} 
\lesssim \norm f. L ^{p} (\sigma ). ^{p } \,. 
\end{equation}
Both of these facts will be used below.

%%%%%%%%%%%%%%%%%%%%%%%%%%%%%% REMARK REMARK REMARK
\begin{remark}\label{r.pc} 
Sawyer's paper on the fractional integrals \cite{MR930072} attributes this construction to 
Muckenhoupt and Wheeden \cite{MR0447956}.  In the intervening years, very similar constructions 
have been used many times, to mention just a few references, see these papers, which frequently 
use the words `corona decomposition:'  
 David and Semmes \cites{MR1251061,MR1113517}, which discuss the use of singular integrals in the 
context of rectifability.  Consult the  corona decomposition 
in \cite{MR2179730}, and the paper \cite{MR1934198} includes several examples in the context of dyadic 
analysis.  Its use in weighted inequalities appears in 
\cites{0906.1941}.  
\end{remark}
%%%%%%%%%%%%%%%%%%%%%%%%%%%%%% REMARK REMARK REMARK

%%%%%%%%%%%%%%%%%%%%%%%%%%%%%% REMARK REMARK REMARK
\begin{remark}\label{r.superGeometric}  It is an important point that the inequality 
\eqref{e.PC2} cannot be reversed.  Indeed, the measure $ \sigma $ is arbitrary, whence 
the averages $ \mathbb E _{G} ^{\sigma } f $ can increase dramatically as one passes from 
larger principal cubes to smaller principal cubes.  
\end{remark}
%%%%%%%%%%%%%%%%%%%%%%%%%%%%%% REMARK REMARK REMARK

We can now 
make a further estimate of $ \beta_k(Q)$. 
Let us set 
$ \mathcal N_k (Q)= \{Q' \in \mathcal Q_k \;:\; Q'\cap  Q ^{(1 )} \neq \emptyset \}$. 
(These are the `neighbors' of $ Q$ in the collection $ \mathcal Q_k$.) 
The basic fact, a consequence of the crowd control property \eqref{e.crowd},  is that 
\begin{equation} \label{e.L<1}
\sharp \mathcal N_k (Q) \lesssim 1 \,. 
\end{equation}

Continuing from \eqref{e.tau1},  we derive a particular consequence of the  construction of $ \mathcal G$ as a subset of the Whitney cubes.   
Namely,   for $ Q \in \mathcal Q_k$,  and $ R\in \mathcal R _{k} (Q) \subset \mathcal Q _{k+m}$, we necessarily have 
$ R\subset Q'_R$ for a unique  $ Q'_R\in \mathcal N _{k} (Q)$. 
And, we have that  $ \Gamma (R) = \Gamma (Q'_R)$  or $ R\in \mathcal G$.  
This permits us to estimate 
\begin{align}
\beta_k(Q)& \le  \sum _{R \in \mathcal R_k (Q)} 
\int _{R} \operatorname T_{Q^{(1)}} ^{\textup{in}}  (\mathbf 1_{E_k(Q)} \omega ) \; \sigma 
\cdot  \mathbb E _{R}^{\sigma}f 
\\ \label{e.tau2}  
& \le \beta_{k,4} (Q)+ \beta_{k,5} (Q) \,, 
\\  \label{e.tau4}
\beta_{k,4} (Q) & \coloneqq 
\sum _ {\substack{ R \in \mathcal R_k (Q) \\ \Gamma (R)= \Gamma (Q'_R)   }}
\int _{R}  \operatorname T_{}  (\mathbf 1_{Q} \omega ) \; \sigma 
\cdot  \mathbb E _{R}^{\sigma}f 
\\ \label{e.tau5}
\beta_{k,5} (Q) & \coloneqq 
\sum _ {\substack{{R\in \mathcal R_k (Q)}\\ R\in \mathcal G }}
\int _{R}  \operatorname T_{}  (\mathbf 1_{Q} \omega ) \; \sigma 
\cdot  \mathbb E _{R}^{\sigma}f  \,. 
\end{align}
We have replaced $  \operatorname T_{Q^{(1)}} ^{\textup{in}}  (\mathbf 1_{E_k(Q)} \omega )$
by the larger term $  \operatorname T_{} (\omega \mathbf 1_{Q})$.

 We  use the 
defining condition of $ \mathcal Q_k ^{3}$, recall \eqref{e.Q3}, which gives us 
\begin{align*}
\eta 2 ^{k} \omega (Q) & \le 2 ^{k} \omega (E_k(Q) ) \le \beta_k(Q) \,, 
\\ 
\textup{whence} \qquad 2 ^{k} &\lesssim \frac {\beta_k(Q)} {\eta \omega (Q)} \,. 
\end{align*}
Thus, our estimate of  the term in \eqref{e.3MN}, $ S _{3,M}$ is given by 
\begin{align}\label{e.S3<}
 S _{3,M} & \lesssim  \eta ^{-q} \bigl[S _{4,M}+ S _{5,M} \bigr]
 \\  \label{e.vm}
 S _{v,M} & \coloneqq 
 \sum _{\substack{  k \equiv M \mod m}} \sum _{Q\in \mathcal Q_k ^{3}} 
 \frac {\omega (E_k(Q))} {\omega (Q) ^{q}} \beta _{k,v} (Q)^{q} \qquad v=4,5\,. 
\end{align}
We estimate these last two terms separately.

%%%%%%%%%%%%%%%%%%%%%%%%%%%%%% SUBSECTION SUBSECTION SUBSECTION SUBSECTION
 %%%%%%%%%%%%%%%%%%%%%%%%%%%%%% SUBSECTION SUBSECTION SUBSECTION SUBSECTION 
\subsection{The Difficult Case, Part 2.}%\label{ss.}
 Let us fix a $ G\in \mathcal G$, that is one of the principal cubes, and define 
\begin{align*}
S_{k,4}' (Q, G)& \coloneqq 
\frac {\omega (E_k(Q))} {\omega (Q) ^{q}}
\Biggl[
\sum _ {\substack{ R \in \mathcal R_k (Q) \\ \Gamma (R)= \Gamma (Q'_R) = G}}
\int _{R}  \operatorname T_{}  (\mathbf 1_{Q} \omega ) \; \sigma 
\cdot  \mathbb E _{R}^{\sigma}f
\Biggr] ^{q} 
\\ 
&\lesssim 
(\mathbb E _{G}^{\sigma}f ) ^{q}
 {\omega (E_k(Q))}  
\Biggl[ \omega (Q) ^{-1} 
\sum _ {\substack{ R \in \mathcal R_k (Q) \\ \Gamma (R)= \Gamma (Q'_R) =G}} 
\int _{R}  \operatorname T_{}  (\mathbf 1_{Q} \omega ) \; \sigma   
\Biggr] ^{q} 
\\
&\lesssim 
(\mathbb E _{G}^{\sigma}f ) ^{q}
{\omega (E_k(Q))} 
\Bigl[ \omega (Q) ^{-1}  
\int _{Q'} 
\operatorname T_{}  (\mathbf 1_{Q} \omega ) \; \sigma 
\Bigr] ^{q} 
\\
&\lesssim 
(\mathbb E _{G}^{\sigma}f ) ^{q}
{\omega (E_k(Q))} 
\Bigl[ \omega (Q) ^{-1}  
\int _{Q} 
\operatorname T_{}  (\mathbf 1_{Q'} \sigma  ) \;\omega  
\Bigr] ^{q}    &  (\textup{duality})
\\
&\lesssim 
(\mathbb E _{G}^{\sigma}f ) ^{q}
{\omega (E_k(Q))} 
\Bigl[ \omega (Q) ^{-1}  
\int _{Q} 
\operatorname T_{}  (\mathbf 1_{G} \sigma  ) \;\omega 
\Bigr] ^{q}    & (\mathbf 1_{Q'}\le \mathbf 1_{G})
\end{align*}
In the last line, we have replaced $ \mathbf 1_{Q'}$ by the larger $ \mathbf 1_{G}$, since $ Q'\subset G$, 
as $ \Gamma (Q')=G$.

The sets $ E_k(Q)$ are themselves disjoint, so that the sum above itself arises from a  linearization of the 
maximal function $ \operatorname M _{\omega  }$.  And we can estimate, again for fixed $ G\in \mathcal G$, 
\begin{align*}
\sum _{k} \sum _{Q \in \mathcal Q_k} 
S_{k,4}' (Q, G) 
& \le (\mathbb E _{G}^{\sigma}f ) ^{q}
\sum _{k} \sum _{Q \in \mathcal Q_k} {\omega (E_k(Q))} 
\Bigl[ \omega (Q) ^{-1}  
\int _{Q} 
\operatorname T_{}  (\mathbf 1_{G} \sigma  ) \;\omega  
\Bigr] ^{q}   
\\& \lesssim (\mathbb E _{G}^{\sigma}f ) ^{q}\int \operatorname M _{\omega  } (\operatorname T_{} (\mathbf 1_{G} \sigma  ) ) ^{q}\;\omega 
\\
& \lesssim (\mathbb E _{G}^{\sigma}f ) ^{q}\int \operatorname T_{} (\mathbf 1_{G} \sigma  )  ^{q}\;\omega 
\\ 
& \lesssim   \mathfrak L _{\ast }  ^{q}  (\mathbb E _{G}^{\sigma}f ) ^{q} \sigma (G) ^{q/p} \,. 
\end{align*}
Here we have used $ L ^{q} (\omega )$ bound on $ \operatorname M _{\omega  }$,  the dual testing condition, 
and the analog of \eqref{e.L'} for $ \mathfrak L _{\ast }$, which holds since we have already established 
the weak-type Theorem.

Combining these last two estimates, observe that we have the following estimate. 
\begin{align}
S _{4,M} & = \sum _{G\in \mathcal G} \sum _{k} \sum _{Q\in \mathcal Q_k} 
S'_{k,4} (Q,G) 
\\
& \lesssim   \mathfrak L _{\ast}^{q}
\sum _{G\in \mathcal G}  (\mathbb E_G^\sigma f) ^{q} \sigma (G) ^{q/p} 
\\   \label{e.again}
&\lesssim \mathfrak L _{\ast}^{q}
\Bigl[
\sum _{G\in \mathcal G}  (\mathbb E_G^\sigma f) ^{p} \sigma (G) 
\Bigr] ^{q/p}    &  (q/p\le1)
\\
&\lesssim \mathfrak L _{\ast}^{q} 
\norm f . L ^{p} (\sigma ) . ^{q} \,. 
\end{align}
The last line follows from \eqref{e.PCmax}.  
This completes the estimate for $ S _{4,M}$.

%%%%%%%%%%%%%%%%%%%%%%%%%%%%%% SUBSECTION SUBSECTION SUBSECTION SUBSECTION
 %%%%%%%%%%%%%%%%%%%%%%%%%%%%%% SUBSECTION SUBSECTION SUBSECTION SUBSECTION 
\subsection{The Difficult Case, Part 3.}%\label{ss.}
It remains to bound $ S _{5,M}$, with $ \beta_{k,5} (Q)$ as defined in \eqref{e.tau5}. 
With an abuse of notation we are going to denote the summand in the definition of $ S _{5,M}$, see 
\eqref{e.vm}, as follows.   
\begin{align} 
\beta_{k,6} (Q) & \coloneqq 
\frac {\omega (E_k(Q))} {\omega (Q) ^{q}} 
\Biggl[
\sum _ {\substack{{R\in \mathcal R_k (Q)}\\ R\in \mathcal G }} 
\int _{R}  \operatorname T_{}  (\mathbf 1_{Q} \omega ) \; \sigma 
\cdot  \mathbb E _{R}^{\sigma}f 
\Biggr] ^{q} 
\\  \label{e.BETA}
& \lesssim  \beta_{k,7}  (Q) \cdot \beta_{k,8} (Q) \,, 
\\  \label{e.beta7}
\beta_{k,7} (Q) & \coloneqq 
\frac {\omega (E_k(Q))} {\omega (Q) ^{q}}  
\Biggl[
\sum _ {\substack{{R\in \mathcal R_k (Q)}\\ R\in \mathcal G }} 
\sigma (R) ^{-p'/p} 
\Biggl(\int _{R}  \operatorname T_{}  (\mathbf 1_{Q} \omega ) \; \sigma   \Biggr) ^{p'} 
\Biggr] ^{q/p'}\,,
\\ \label{e.beta8}
\beta_{k,8} (Q) & \coloneqq 
\Biggl[
\sum _ {\substack{{R\in \mathcal R_k (Q)}\\ R\in \mathcal G }} 
\sigma (R) \cdot ( \mathbb E _{R}^{\sigma}f  ) ^{p}
\Biggr] ^{q/p}\,. 
\end{align}
Here, we have introduced the terms $ \sigma (Q') ^{\pm 1/p}$, and used the H\"older inequality 
in the $ \ell ^{p}$--$ \ell ^{p'}$ norms.  

Our first observation that the terms $ \beta_{k,7} (Q)$ admit a uniform bound.  
On the right in \eqref{e.beta7}, we use the trivial bound $ \omega (E_k(Q))\le \omega (Q)$, and push the $ p'$ inside the
integral to place ourselves in a position where we can appeal to the dual testing condition, namely \eqref{e.L'}. 
\begin{align*}
\beta_{k,7} (Q) & \lesssim 
\frac 1 {\omega (Q) ^{q-1}}  
\Biggl[
\sum _ {\substack{{R\in \mathcal R_k (Q)}\\ R\in \mathcal G }} 
\int _{R}  \operatorname T_{}  (\mathbf 1_{Q} \omega ) ^{p'} \; \sigma   
\Biggr] ^{q/p'}
\\
& \lesssim \frac 1 {\omega (Q) ^{q-1}}   \norm \operatorname T_{} ( \mathbf 1_{Q} \omega ). L ^{p'} (\sigma ). ^{q} 
\\
& \lesssim  \mathfrak L  ^{q} \frac {\omega (Q) ^{q/q'}} {\omega (Q) ^{q-1}}
\\
& \lesssim \mathfrak L ^{q} \,. 
\end{align*}

It follows from \eqref{e.vm} and \eqref{e.BETA} that we have 
\begin{align}
S _{5,M} &\lesssim \mathfrak L ^{q}
\sum _{k} \sum _{Q\in \mathcal Q_k ^{3}} \beta_{k,8} (Q) 
\\ & 
\lesssim  \mathfrak L ^{q} \Biggl[\sum _{k} \sum _{Q\in \mathcal Q_k ^{3}} \beta_{k,8} (Q) ^{p/q} \Biggr] ^{q/p} 
& (p\le q)
\\  \label{e.S5<}
&\lesssim  \mathfrak L ^{q} \Biggl[\sum _{k} \sum _{Q\in \mathcal Q_k ^{3}} 
\sum _ {\substack{{R\in \mathcal R_k (Q)}\\ R\in \mathcal G }} 
\sigma (R) \cdot ( \mathbb E _{R}^{\sigma}f  ) ^{p}
\Biggr] ^{q/p}  \,.
\end{align}
At this point, a subtle point arises.  The cubes $ R\in \mathcal R_k (Q) \subset \mathcal Q _{k+m}$, 
but a given cube $ R$ can potentially arise in many $\mathcal  Q _{k+m}$, as we noted in \eqref{e.many}. 
A given $ R$ can potentially 
arise in the sum above many times, however  this possibility  is excluded by Proposition~\ref{p.bounded} below.   
In particular, we can continue the estimate above as follows. 
\begin{align}
\eqref{e.S5<}& \lesssim  \mathfrak L ^{q} \Biggl[\sum _{G\in \mathcal G}  \sigma (G) 
(\mathbb E _{G}^{\sigma} f) ^{p} \Biggr] ^{q/p}  \lesssim \mathfrak  L ^{q} \norm f. L ^{p} (\sigma ). ^{q} \,, 
\end{align}
with the last inequality following from \eqref{e.PCmax}.  The proof of Theorem~\ref{t.strong} 
is complete, aside from the next proposition.

%%%%%%%%%%%%%%%%%%%%%%%%%%%%%% PROPOSITION PROPOSITION PROPOSITION
\begin{proposition}\label{p.bounded}[Bounded Occurrences of $ R$]  Fix a cube $ R$, and 
for  $ 1\le \ell  \le L$   suppose that 
%%  ENUMERATE
\begin{enumerate}
\item there is an integer $ k (\ell )$ and  $ Q _{\ell } \in \mathcal Q _{k(\ell) } ^{3}$ with $ R\in \mathcal R_{k (\ell )}  (Q)$, 
\item the pairs $ (Q _{\ell }, k(\ell ) )$ are distinct. 
\end{enumerate}
%% ENUMERATE
We then have that $ L \lesssim 1  $, with the implied constant depending upon $ 1 $, dimension, and $ 
\eta $, 
the small constant that enters the proof at \eqref{e.Q1}---\eqref{e.Q3}. 

\end{proposition}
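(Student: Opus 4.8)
The plan is to show that a fixed cube $R$ can participate in only boundedly many pairs $(Q_\ell, k(\ell))$ by splitting the analysis into two parts: first bounding the number of admissible \emph{scales} $k$, and then, for each such scale, bounding the number of cubes $Q \in \mathcal Q_k^3$ for which $R \in \mathcal R_k(Q)$. For the second part the bound is essentially immediate: by definition $R \in \mathcal R_k(Q)$ means $R \in \mathcal Q_{k+m}$ and $Q^{(1)} \cap R \neq \emptyset$; since the cubes of $\mathcal Q_k$ are disjoint and $R$ (for the admissible range, where $R \subset Q^{(1)}$) has side length comparable to or smaller than that of $Q$, only $O(1)$ cubes $Q \in \mathcal Q_k$ can have $Q^{(1)}$ meeting $R$. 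This is exactly the kind of counting already used in \eqref{e.L<1} and flows from the finite overlap \eqref{e.fo} and crowd-control \eqref{e.crowd} properties, together with the fact that $R \subset Q^{(1)}$ forces the scales of $R$ and $Q$ to be within a bounded multiple of each other.

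The heart of the matter is bounding the number of admissible scales $k$. The key constraint is that for $R$ to lie in $\mathcal R_k(Q)$ with $Q\in\mathcal Q_k^3$, we need $R \in \mathcal Q_{k+m}$; but, as emphasized in \eqref{e.many}, a single cube $R$ can belong to $\mathcal Q_{k+m}$ for a whole interval $K_-(R) \le k+m \le K_+(R)$, and that interval has no a priori length bound. The resolution is to use the membership $Q \in \mathcal Q_k^3 = \mathcal Q_k \setminus \mathcal Q_k^1 \setminus \mathcal Q_k^2$: in particular $Q \notin \mathcal Q_k^1$, which by \eqref{e.Q1} means $\omega(E_k(Q)) > \eta\,\omega(Q)$. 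Now for the $O(1)$ candidate cubes $Q$ associated to a given scale $k$ (from the previous paragraph), we have $Q \subset R^{(c)}$ for some fixed bounded number of generations $c$ (since $R \subset Q^{(1)}$ and the scales are comparable), so $\omega(Q) \gtrsim \omega(R)$ is not quite right — rather $R^{(1)} \subset Q^{(1)}$-type containments give $\omega(Q) \gtrsim$ a fixed fraction is false in general, so instead one argues: each such $Q$ satisfies $E_k(Q) \subset Q \subset$ a fixed cube $\widetilde R$ of bounded generations above $R$, and $\omega(E_k(Q)) > \eta\,\omega(Q) \ge \eta\,\omega(R)$ only if $\omega(Q)\gtrsim\omega(R)$; the cleaner route is that the sets $E_k(Q)$, as $k$ ranges and $Q$ ranges over the $O(1)$ candidates at each $k$, are essentially pairwise disjoint in $k$ (this is precisely the disjointness of $E_k(Q)$ in $k$ invoked around \eqref{e.many<}), all contained in the fixed cube $\widetilde R$, and each has $\omega$-measure $> \eta\,\omega(Q) \gtrsim \eta\,\omega(R) \gtrsim \eta\, 2^{-Cd}\,\omega(\widetilde R)$. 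Summing the measures of these disjoint subsets of $\widetilde R$ forces the number of admissible scales to be $\lesssim \eta^{-1}$.

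Let me state the counting more carefully, since this is where care is needed. Fix $R$ and let $\widetilde R \coloneqq R^{(s)}$ be a fixed ancestor, $s = s(d,m)$ chosen so that every candidate $Q$ (with $R \in \mathcal R_k(Q)$, any admissible $k$) satisfies $Q^{(1)} \subset \widetilde R$; this is possible because $R \subset Q^{(1)}$ and $|Q^{(1)}|^{1/d} \le 2^{m+1}|R|^{1/d}$ roughly (from $R \in \mathcal Q_{k+m}$, $Q \in \mathcal Q_k$, and the Whitney structure relating scales at consecutive levels). For each admissible scale $k$, pick one such $Q = Q(k)$; we noted there are $O(1)$ choices but selecting one suffices to bound the scale count. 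The sets $\{E_k(Q(k))\}_k$ are pairwise disjoint (disjointness in $k$ of $E_k(\cdot)$, by \eqref{e.EQ}, since $E_k \subset \Omega_{k+m-1}\setminus\Omega_{k+m}$ and the $\Omega$'s are nested), all lie in $\widetilde R$, and $\omega(E_k(Q(k))) > \eta\,\omega(Q(k))$. It remains to see $\omega(Q(k)) \gtrsim \omega(\widetilde R)$; this follows because $Q(k)^{(1)} \subset \widetilde R$ with bounded generation gap, but $\omega$ is an arbitrary weight so we cannot compare $\omega(Q(k))$ to $\omega(\widetilde R)$ directly --- instead we run the disjointness argument without that step: $\sum_k \omega(E_k(Q(k))) \le \omega(\widetilde R)$ combined with $\omega(E_k(Q(k))) > \eta\,\omega(Q(k))$ gives $\sum_k \omega(Q(k)) < \eta^{-1}\omega(\widetilde R)$, and since each $Q(k) \supset R$ — wait, that also fails.

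\textbf{The actual argument.} The clean version avoids comparing arbitrary-weight measures of nested cubes. Fix $R$. For a pair $(Q,k)$ with $R \in \mathcal R_k(Q)$ and $Q \in \mathcal Q_k^3$, first note $R \subset Q^{(1)}$ (shown in the text after \eqref{e.RQ}) and $R \in \mathcal Q_{k+m}$, so the dyadic scale of $Q$ is determined up to finitely many values by that of $R$: indeed $R$ being Whitney in $\Omega_{k+m}$ and $Q$ Whitney in $\Omega_k$ with $R \subset Q^{(1)}$ pins $|Q|$ to within a bounded factor of $|R|$ once $k$ is fixed, but more to the point, for the occurrence count we argue as follows. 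The admissible $k$ all satisfy $R \in \mathcal Q_{k+m}$, i.e.\ $k+m \in [K_-(R), K_+(R)]$. Among these, partition by whether $R \in \mathcal Q_{k+m}^1$ or not. If $R \notin \mathcal Q_{k+m}^1$ for the relevant level, then by \eqref{e.many<} the number of such levels is $\le \eta^{-1}$, done. The remaining case, $R \in \mathcal Q_{k+m}^1$, must be excluded: but this is where we use that $R \in \mathcal R_k(Q)$ with $Q \in \mathcal Q_k^3$, hence $Q \notin \mathcal Q_k^1$, so $\omega(E_k(Q)) > \eta\,\omega(Q) \ge \eta\,\omega(R)$ (using $R \subset Q$ — actually $R \subset Q^{(1)}$, so $\omega(Q) \ge \omega(R)$ is not automatic either). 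I would prove the claim in the form: \emph{the sets $E_k(Q)$ over all admissible pairs are pairwise disjoint, each contained in the bounded-generation ancestor $\widetilde R = R^{(s)}$, and satisfies $\omega(E_k(Q)) > \eta\,\omega(Q)$ where $\omega(Q) \gtrsim 2^{-sd}$-fraction — no.} So honestly, the main obstacle, and the step I would spend the most effort on, is exactly \emph{circumventing the need to compare $\omega$-masses of cubes at comparable-but-different scales}, since $\omega$ is arbitrary; I expect the resolution is to combine (i) the disjointness in $k$ of the $E_k(Q)$, (ii) the $O(1)$ bound on cubes $Q$ at each fixed $k$ from \eqref{e.crowd}–\eqref{e.fo}, and (iii) the bound \eqref{e.many<} which already packages the $\eta^{-1}$ counting for memberships outside $\mathcal Q^1$, applied at level $k+m$ to $R$ itself; the final count is then $L \lesssim \eta^{-1}$ times the $O(1)$ geometric factor.
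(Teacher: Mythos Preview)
There is a genuine gap. Your repeated attempts to bound the number of admissible scales $k$ all founder on the same obstacle, which you yourself flag: for an arbitrary weight $\omega$ there is no way to compare $\omega(Q)$ to $\omega(\widetilde R)$ or $\omega(R)$, and your final suggestion --- applying \eqref{e.many<} to $R$ at level $k+m$ --- is unjustified, because the hypothesis gives $Q \in \mathcal Q_k^3$, which says nothing about whether $R \in \mathcal Q_{k+m}^1$. So the argument as written does not close.

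The missing idea is to \emph{reverse the order} of your two steps. First bound the number of \emph{distinct cubes} $Q_\ell$ by a purely combinatorial argument that uses no weight at all: since every $Q_\ell^{(1)}$ contains the fixed cube $R$, the parents $Q_\ell^{(1)}$ are linearly ordered by inclusion, and a strictly increasing chain of them of length exceeding $m+1$ forces (via the nested property \eqref{e.nested} and the fact that $R \in \mathcal Q_{k(\ell)+m}$ for each $\ell$) a violation of the Whitney condition \eqref{e.Whit} for $R$ itself. This bounds the number of distinct $Q_\ell^{(1)}$, hence of distinct $Q_\ell$, by a constant depending only on $m$ and dimension. \emph{Then}, for each fixed cube $\bar Q$ among these finitely many possibilities, apply \eqref{e.many<} to $\bar Q$: since $\bar Q \in \mathcal Q_{k_\ell}^3 \subset \mathcal Q_{k_\ell} \setminus \mathcal Q_{k_\ell}^1$ for every admissible $k_\ell$, the number of such $k_\ell$ is at most $\eta^{-1}$. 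The disjointness-of-$E_k$ argument you were reaching for is exactly \eqref{e.many<}, but it must be applied to the cube $Q$ whose membership in $\mathcal Q_k^3$ is actually assumed, not to $R$.
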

%%%%%%%%%%%%%%%%%%%%%%%%%%%%%% PROPOSITION PROPOSITION PROPOSITION

%%%%%%%%%%%%%%%%%%%%%%%%%%%%%% PROOF PROOF PROOF
\begin{proof}
There are two principal obstructions to the Lemma being true:  (1) It could occur, after a potential reordering that 
$ Q _{1} \subsetneq Q _{2} \subsetneq \cdots \subsetneq Q _{L}$.  (2) It could happen that 
$ Q_1 = \cdots = Q_L$ but the $ k _{\ell }$ are distinct.  We treat these two obstructions in turn.  

Fix $ R$. We have see that $ R\subset Q ^{(1 )} _{\ell }$ for all $ 1\le \ell \le L$, 
see the paragraph after \eqref{e.RQ}. 
Suppose we have 
$
R \subset Q ^{(1 )} _{\ell _1} \subsetneq Q ^{(1 )} _{\ell _2} 
$
with $ k(\ell _1) < k(\ell _2)$.   This would violate the Whitney condition \eqref{e.Whit}.  

Let us now consider the obstruction (1)  above, namely after a relabeling, we have 
$ k( 1 ) > k(2 ) > \cdots > k({m+2})$, and 
\begin{equation*}
R \subset Q ^{(1 )} _{k(1)} \subsetneq Q ^{(1 )} _{k(2)} \subsetneq Q ^{(1 )} _{k(3)} 
\subsetneq \cdots \subsetneq Q ^{(1 )} _{k({m+ 2})} 
\end{equation*}
 This implies that $ R\in \mathcal Q _{k(1)+m}$ 
and $ R \in \mathcal Q _{k({m+2})+m}$, so that again by the nested property, 
$ R\in \mathcal Q _{k}$ for all $ k({m+ 2})+m\le k\le k(1)+m$.  Therefore, for $ s=m+2$ 
we have  $ R, Q _{s} \in \mathcal Q _{ s}$ and $ R ^{(2)} \subset Q _{s} ^{(1)
}$.  That is, $ R$ violates the Whitney condition \eqref{e.Whit}, a contradiction.  

We conclude that there are only a bounded number of positions for the cube $ Q  _{\ell } ^{(1 )}$, 
and hence a bounded number of positions for the cubes $ Q _{\ell }$.  
Thus, after a pigeonhole argument, and relabeling, we are concerned with the obstruction (2) above. 
We can after a relabeling, add to the conditions 
(1) and (2) in the Proposition that  there is a fixed cube $ \overline Q $ 
with  $ Q _{\ell } = \overline Q$ for  $ 1\le \ell \le L'$, and have 
 $ L \lesssim L'$.  This means in particular that the $ k _{\ell }$ are distinct.

The defining condition, \eqref{e.Q3}, that $ \overline Q \in \mathcal Q _{k _{\ell }} ^{3}$ means in particular that 
we have $ \omega (E_{k _{\ell }}(\overline Q )) > \eta \omega (\overline Q)$.  But, the condition that 
the $ k _{\ell }$ be distinct means that the 
sets $ E_{k _{\ell }}(\overline Q)$ are distinct, hence $ L ' \le \eta ^{-1} $ and our proof is finished.  

\end{proof}
%%%%%%%%%%%%%%%%%%%%%%%%%%%%%% PROOF PROOF PROOF

\begin{bibsection}
\begin{biblist}

\bib{MR1934198}{article}{
   author={Auscher, P.},
   author={Hofmann, S.},
   author={Muscalu, C.},
   author={Tao, T.},
   author={Thiele, C.},
   title={Carleson measures, trees, extrapolation, and $T(b)$ theorems},
   journal={Publ. Mat.},
   volume={46},
   date={2002},
   number={2},
   pages={257--325},
   issn={0214-1493},
   review={\MR{1934198 (2003f:42019)}},
}

\bib{MR2433959}{article}{
   author={Beznosova, Oleksandra V.},
   title={Linear bound for the dyadic paraproduct on weighted Lebesgue space
   $L\sb 2(\omega )$},
   journal={J. Funct. Anal.},
   volume={255},
   date={2008},
   number={4},
   pages={994--1007},
   issn={0022-1236},
   review={\MR{2433959}},
}

% \bib{MR1124164}{article}{
%    author={Buckley, Stephen M.},
%    title={Estimates for operator norms on weighted spaces and reverse Jensen
%    inequalities},
%    journal={Trans. Amer. Math. Soc.},
%    volume={340},
%    date={1993},
%    number={1},
%    pages={253--272},
%    issn={0002-9947},
%    review={\MR{1124164 (94a:42011)}},
% }

\bib{MR1251061}{book}{
   author={David, Guy},
   author={Semmes, Stephen},
   title={Analysis of and on uniformly rectifiable sets},
   series={Mathematical Surveys and Monographs},
   volume={38},
   publisher={American Mathematical Society},
   place={Providence, RI},
   date={1993},
   pages={xii+356},
   isbn={0-8218-1537-7},
   review={\MR{1251061 (94i:28003)}},
}

\bib{MR1113517}{article}{
   author={David, G.},
   author={Semmes, S.},
   title={Singular integrals and rectifiable sets in ${\bf R}\sp n$: Beyond
   Lipschitz graphs},
   language={English, with French summary},
   journal={Ast\'erisque},
   number={193},
   date={1991},
   pages={152},
   issn={0303-1179},
   review={\MR{1113517 (92j:42016)}},
}

\bib{gk}{article}{
   author={Gabidzashvili, M. A.},
   author={Kokilashvili, V.},
   title={Two weight weak type inequalities for fractional 
type integrals},
   journal={Ceskoslovenska 
Akademie Ved.},
   volume={45},
   date={1989},
   pages={1--11},
}

\bib{MR1791462}{book}{
   author={Genebashvili, Ioseb},
   author={Gogatishvili, Amiran},
   author={Kokilashvili, Vakhtang},
   author={Krbec, Miroslav},
   title={Weight theory for integral transforms on spaces of homogeneous
   type},
   series={Pitman Monographs and Surveys in Pure and Applied Mathematics},
   volume={92},
   publisher={Longman},
   place={Harlow},
   date={1998},
   pages={xii+410},
   isbn={0-582-30295-1},
   review={\MR{1791462 (2003b:42002)}},
}

\bib{MR804116}{article}{
   author={Kokilashvili, V. M.},
   author={Krbets, M.},
   title={Weighted inequalities for Riesz potentials and fractional maximal
   functions in Orlicz spaces},
   language={Russian},
   journal={Dokl. Akad. Nauk SSSR},
   volume={283},
   date={1985},
   number={2},
   pages={280--283},
   issn={0002-3264},
   review={\MR{804116 (87h:42028)}},
}

\bib{MR2652182}{article}{
   author={Lacey, Michael T.},
   author={Moen, Kabe},
   author={P{\'e}rez, Carlos},
   author={Torres, Rodolfo H.},
   title={Sharp weighted bounds for fractional integral operators},
   journal={J. Funct. Anal.},
   volume={259},
   date={2010},
   number={5},
   pages={1073--1097},
   issn={0022-1236},
   review={\MR{2652182}},
   doi={10.1016/j.jfa.2010.02.004},
}

  \bib{MR2657437}{article}{
   author={Lacey, Michael T.},
   author={Petermichl, Stefanie},
   author={Reguera, Maria Carmen},
   title={Sharp $A_2$ inequality for Haar shift operators},
   journal={Math. Ann.},
   volume={348},
   date={2010},
   number={1},
   pages={127--141},
   issn={0025-5831},
   review={\MR{2657437}},
   doi={10.1007/s00208-009-0473-y},
}

\bib{0805.0246}{article}{
  author={Lacey, Michael T.},
    author={Sawyer, Eric T.},
      author={Uriarte-Tuero, Ignacio},
      title={A characterization of two weight norm inequalities for maximal singular integrals},
      date={2008},
      journal={Anal. PDE, to appear},
      eprint={http://www.arxiv.org/abs/0805.0246},
      }

\bib{MR0447956}{article}{
   author={Muckenhoupt, Benjamin},
   author={Wheeden, Richard L.},
   title={Some weighted weak-type inequalities for the Hardy-Littlewood
   maximal function and the Hilbert transform},
   journal={Indiana Univ. Math. J.},
   volume={26},
   date={1977},
   number={5},
   pages={801--816},
   issn={0022-2518},
   review={\MR{0447956 (56 \#6266)}},
}

\bib{MR1685781}{article}{
   author={Nazarov, F.},
   author={Treil, S.},
   author={Volberg, A.},
   title={The Bellman functions and two-weight inequalities for Haar
   multipliers},
   journal={J. Amer. Math. Soc.},
   volume={12},
   date={1999},
   number={4},
   pages={909--928},
   issn={0894-0347},
   review={\MR{1685781 (2000k:42009)}},
}

\bib{MR2407233}{article}{
   author={Nazarov, F.},
   author={Treil, S.},
   author={Volberg, A.},
   title={Two weight inequalities for individual Haar multipliers and other
   well localized operators},
   journal={Math. Res. Lett.},
   volume={15},
   date={2008},
   number={3},
   pages={583--597},
   issn={1073-2780},
   review={\MR{2407233}},
}

\bib{MR2354322}{article}{
   author={Petermichl, S.},
   title={The sharp bound for the Hilbert transform on weighted Lebesgue
   spaces in terms of the classical $A\sb p$ characteristic},
   journal={Amer. J. Math.},
   volume={129},
   date={2007},
   number={5},
   pages={1355--1375},
   issn={0002-9327},
   review={\MR{2354322 (2008k:42066)}},
}

\bib{MR2367098}{article}{
   author={Petermichl, Stefanie},
   title={The sharp weighted bound for the Riesz transforms},
   journal={Proc. Amer. Math. Soc.},
   volume={136},
   date={2008},
   number={4},
   pages={1237--1249},
   issn={0002-9939},
   review={\MR{2367098 (2009c:42034)}},
}

\bib{MR1894362}{article}{
   author={Petermichl, Stefanie},
   author={Volberg, Alexander},
   title={Heating of the Ahlfors-Beurling operator: weakly quasiregular maps
   on the plane are quasiregular},
   journal={Duke Math. J.},
   volume={112},
   date={2002},
   number={2},
   pages={281--305},
   issn={0012-7094},
   review={\MR{1894362 (2003d:42025)}},
}

\bib{MR676801}{article}{
   author={Sawyer, Eric T.},
   title={A characterization of a two-weight norm inequality for maximal
   operators},
   journal={Studia Math.},
   volume={75},
   date={1982},
   number={1},
   pages={1--11},
   issn={0039-3223},
   review={\MR{676801 (84i:42032)}},
}

\bib{MR719674}{article}{
   author={Sawyer, Eric},
   title={A two weight weak type inequality for fractional integrals},
   journal={Trans. Amer. Math. Soc.},
   volume={281},
   date={1984},
   number={1},
   pages={339--345},
   issn={0002-9947},
   review={\MR{719674 (85j:26010)}},
}

\bib{MR930072}{article}{
   author={Sawyer, Eric T.},
   title={A characterization of two weight norm inequalities for fractional
   and Poisson integrals},
   journal={Trans. Amer. Math. Soc.},
   volume={308},
   date={1988},
   number={2},
   pages={533--545},
   issn={0002-9947},
   review={\MR{930072 (89d:26009)}},
}

\bib{MR1175693}{article}{
   author={Sawyer, E.},
   author={Wheeden, R. L.},
   title={Weighted inequalities for fractional integrals on Euclidean and
   homogeneous spaces},
   journal={Amer. J. Math.},
   volume={114},
   date={1992},
   number={4},
   pages={813--874},
   issn={0002-9327},
   review={\MR{1175693 (94i:42024)}},
}

\bib{MR1437584}{article}{
   author={Sawyer, Eric T.},
   author={Wheeden, Richard L.},
   author={Zhao, Shiying},
   title={Weighted norm inequalities for operators of potential type and
   fractional maximal functions},
   journal={Potential Anal.},
   volume={5},
   date={1996},
   number={6},
   pages={523--580},
   issn={0926-2601},
   review={\MR{1437584 (98g:42026)}},
}

\bib{MR2179730}{article}{
   author={Tolsa, Xavier},
   title={Bilipschitz maps, analytic capacity, and the Cauchy integral},
   journal={Ann. of Math. (2)},
   volume={162},
   date={2005},
   number={3},
   pages={1243--1304},
   issn={0003-486X},
   review={\MR{2179730 (2006g:30033)}},
}

\bib{MR2019058}{book}{
   author={Volberg, Alexander},
   title={Calder\'on-Zygmund capacities and operators on nonhomogeneous
   spaces},
   series={CBMS Regional Conference Series in Mathematics},
   volume={100},
   publisher={Published for the Conference Board of the Mathematical
   Sciences, Washington, DC},
   date={2003},
   pages={iv+167},
   isbn={0-8218-3252-2},
   review={\MR{2019058 (2005c:42015)}},
}

\bib{MR1748283}{article}{
   author={Wittwer, Janine},
   title={A sharp estimate on the norm of the martingale transform},
   journal={Math. Res. Lett.},
   volume={7},
   date={2000},
   number={1},
   pages={1--12},
   issn={1073-2780},
   review={\MR{1748283 (2001e:42022)}},
}

\bib{MR1897458}{article}{
   author={Wittwer, Janine},
   title={A sharp estimate on the norm of the continuous square function},
   journal={Proc. Amer. Math. Soc.},
   volume={130},
   date={2002},
   number={8},
   pages={2335--2342 (electronic)},
   issn={0002-9939},
   review={\MR{1897458 (2003j:42009)}},
}

\end{biblist}
\end{bibsection}

\end{document}